\newcommand{\bea}{\begin{eqnarray}}
\newcommand{\eea}{\end{eqnarray}}
\newcommand{\clb}{\mathcal{B}}
\newcommand{\cld}{\mathcal{D}}
\newcommand{\cle}{\mathcal{E}}
\newcommand{\clh}{\mathcal{H}}
\newcommand{\clk}{\mathcal{K}}
\newcommand{\cls}{\mathcal{S}}
\newcommand{\clw}{\mathcal{W}}
\newcommand{\D}{\mathbb{D}}
\newcommand{\raro}{\rightarrow}
\newcommand{\TP}{\tilde{\Pi}}
\newcommand{\mo}{\mathop{\oplus}}
\def\textmatrix#1&#2\\#3&#4\\{\bigl({#1 \atop #3}\ {#2 \atop #4}\bigr)}
\def\dispmatrix#1&#2\\#3&#4\\{\left({#1 \atop #3}\ {#2 \atop #4}\right)}
\newcommand{\be}{\begin{equation}}
\newcommand{\ee}{\end{equation}}
\newcommand{\ben}{\begin{eqnarray*}}
\newcommand{\een}{\end{eqnarray*}}
\newcommand{\NI}{\noindent}
\newcommand{\bi}{\begin{itemize}}
\newcommand{\ei}{\end{itemize}}
\newtheorem{theorem}{Theorem}[section]
\newtheorem{corollary}[theorem]{Corollary}
\newtheorem{lemma}[theorem]{Lemma}
\theoremstyle{definition}
\numberwithin{equation}{section}
\begin{document}


\baselineskip=17pt


\title[Pairs of Commuting Isometries - I]{Pairs of Commuting Isometries - I}

\author[A. Maji]{Amit Maji}
\address{Indian Statistical Institute\\ Statistics and Mathematics Unit\\ 8th Mile, Mysore Road\\ Bangalore, 560059, India}
\email{amit.iitm07@gmail.com}

\author[J. Sarkar]{Jaydeb Sarkar}
\address{Indian Statistical Institute\\ Statistics and Mathematics Unit\\ 8th Mile, Mysore Road\\ Bangalore, 560059, India}
\email{jay@isibang.ac.in, jaydeb@gmail.com}

\author[Sankar T. R.]{Sankar T. R.}
\address{Indian Statistical Institute\\ Statistics and Mathematics Unit\\ 8th Mile, Mysore Road\\ Bangalore, 560059, India}
\email{sankartr90@gmail.com}

\date{}

\begin{abstract}
We present an explicit version of Berger, Coburn and Lebow's
classification result for pure pairs of commuting isometries in the
sense of an explicit recipe for constructing pairs of commuting
isometric multipliers with precise coefficients. We describe a
complete set of (joint) unitary invariants and compare the Berger,
Coburn and Lebow's representations with other natural analytic
representations of pure pairs of commuting isometries. Finally, we
study the defect operators of pairs of commuting isometries.
\end{abstract}

\subjclass[2010]{Primary 47A05, 47A13, 47A20, 47A45, 47A65; Secondary 46E22, 46E40}

\keywords{Isometries, commuting pairs, commutators, multipliers,
Hardy space, defect operators}

\maketitle

\section{Introduction}
A very general and fundamental problem in the theory of bounded linear operators
on Hilbert spaces is to find classifications and representations of commuting
families of isometries.

In the case of single isometries this question has a complete and
explicit answer: If $V$ is an isometry on a Hilbert space $\clh$,
then there exist a Hilbert space $\clh_u$ and a unitary operator
$U$ on $\clh_u$ such that $V$ on $\clh$ and $\begin{bmatrix}S
\otimes I_{\clw} & 0
\\0 & U
\end{bmatrix}$ on $(l^2(\mathbb{Z}_+) \otimes {\clw}) \oplus \clh_u$
are unitarily equivalent, where $\clw = \ker V^*$ is the wandering
subspace for $V$ and $S$ is the forward shift operator on
$l^2(\mathbb{Z}_+)$ \cite{H}. This fundamental result is due to J.
von Neumann \cite{VN} and H. Wold \cite{W} (see Theorem
\ref{thm-Wold} for more details).

The case of pairs (and $n$-tuples) of commuting isometries is more
subtle, and is directly related to the commutant lifting theorem
\cite{FF} (in terms of an explicit, and then unique solution),
invariant subspace problem \cite{HH} and representations of
contractions on Hilbert spaces in function Hilbert spaces \cite{NF}.
For instance:

\NI \textsf{(a)} Let $\cls$ be a closed joint $(M_{z_1},
M_{z_2})$-invariant subspace of the Hardy space $H^2(\D^2)$. Then
$(M_{z_1}|_{\cls}, M_{z_2}|_{\cls})$ on $\cls$ is a pure (see
Section 3) pair of commuting isometries. Classification of such
pairs of isometries is largely unknown (see Rudin \cite{RUD}).

\NI \textsf{(b)} Let $T$ be a contraction on a Hilbert space $\clh$.
Then there exists a pair of commuting isometries $(V_1, V_2)$ on a
Hilbert space $\clk$ such that $T$ and $P_{\ker V_2^*} V_1|_{\ker
V_2^*}$ are unitarily equivalent (see Bercovici, Douglas and Foias
\cite{BDF3}).

\NI \textsf{(c)} The celebrated Ando dilation theorem (see Ando
\cite{AN}) states that a commuting pair of contractions dilates to a
commuting pair of isometries. Again, the structure of Ando's pairs
of commuting isometries is largely unknown.

The main purpose of this paper is to explore and relate various
natural representations of a large class of pairs of commuting
isometries on Hilbert spaces. The geometry of Hilbert spaces, the
classical Wold-von Neumann decomposition for isometries, the
analytic structure of the commutator of the unilateral shift, and
the Berger, Coburn and Lebow \cite{BCL} representations of pure
pairs of commuting isometries are the main guiding principles for
our study. The Berger, Coburn and Lebow theorem states that: Let
$(V_1, V_2)$ be a pair of commuting isometries on a Hilbert space
$\clh$, and let $V = V_1 V_2$ be a shift (or, a pure isometry - see
Section 2). Then there exist a Hilbert space $\clw$, an orthogonal
projection $P$ and a unitary operator $U$ on $\clw$ such that
\[
{\Phi_1}(z)= U^* (P + z P^{\perp}) \quad \text{and} \quad
{\Phi_2}(z) = (P^{\perp} + zP) U \quad \quad (z \in \D),
\]
are commuting isometric multipliers in $H^\infty_{\clb(\clw)}(\D)$,
and $(V_1, V_2, V)$ on $\clh$ and $(M_{\Phi_1}, M_{\Phi_2}, M_z)$ on
$H^2_{\clw}(\D)$ are unitarily equivalent (see Bercovici, Douglas
and Foias \cite{BDF3} for an elegant proof).

Here and further on, given a Hilbert space $\clh$ and a closed
subspace $\cls$ of $\clh$, $P_{\cls}$ denotes the orthogonal
projection of $\clh$ onto $\cls$. We also set
\[
P_{\cls}^{\perp} = I_{\clh} - P_{\cls}.
\]

In this paper we give a new and more concrete treatment, in the
sense of explicit representations and analytic descriptions, to the
structure of pure pairs of commuting isometries. More specifically,
we provide an explicit recipe for constructing the isometric
multipliers $(\Phi_1(z), \Phi_2(z))$, and the operators $U$ and $P$
involved in the coefficients of $\Phi_1$ and $\Phi_2$ (see Theorems
\ref{thm-BCLnew} and \ref{thm-BCLold}). Then we compare the Berger,
Coburn and Lebow representations with other possible analytic
representations of pairs of commuting isometries.

In Section 6, which is independent of the remaining part of the
paper, we analyze defect operators for (not necessarily pure) pairs
of commuting isometries. We provide a list of characterizations of
pairs of commuting isometries with positive defect operators (see
Theorem \ref{lemma21}). Our results hold in a more general setting
with somewhat simpler proofs (see Theorem \ref{thm-defect1} for
instance) than the one considered by He, Qin and Yang \cite{HQY}.
Moreover, we prove that for a large class of pure pairs of commuting
isometries the defect operator is negative if and only if the defect
operator is the zero operator.

The paper is organized as follows. In Section 2 we review the
classical Wold-von Neumann theorem for isometries and then prove a
representation theorem for commutators of shifts. In Section 3 we
discuss some basic relationships between wandering subspaces for
commuting isometries, followed by a new and explicit proof of the
Berger, Coburn and Lebow characterizations of pure pairs of
commuting isometries. Section 4 is devoted to a short discussion
about joint unitary invariants of pure pairs of commuting
isometries. Section 5 ties together the explicit Berger, Coburn and
Lebow representation and other possible analytic representations of
a pair of commuting isometries. Then, in Section 6, we present a
general theory for pairs of commuting isometries and analyze the
defect operators. Concluding remarks, future directions and a close
connection of our consideration with the Sz.-Nagy and Foias
characteristic functions for contractions are discussed in Section
7.

\section{Wold-von Neumann decomposition and commutators}

We begin this section by briefly recalling the construction of the
classical Wold-von Neumann decomposition of isometric operators on
Hilbert spaces. Here our presentation is more algebraic and geared
towards the main theme of the paper. First, recall that an isometry
$V$ on a Hilbert space $\clh$ is said to be \textit{pure}, or a
\textit{shift}, if it has no unitary direct summand, or
equivalently, if $\displaystyle{\lim_{m \raro \infty}} V^{*m} = 0$
in the strong operator topology (see Halmos \cite{H}).

Let $V$ be an isometry on a Hilbert space $\clh$, and let $\clw(V)$
be the \textit{wandering subspace} \cite{H} for $V$, that is,
\[
\clw(V) = \clh \ominus V \clh.
\]

The classical Wold-von Neumann decomposition is as follows:

\begin{theorem}\label{thm-Wold}\textsf{(Wold-von Neumann
decomposition)} Let $V$ be an isometry on a Hilbert space $\clh$.
Then $\clh$ decomposes as a direct sum of $V$-reducing subspaces
$\clh_s(V) = \displaystyle{\mo_{m=0}^\infty} V^m \clw(V)$ and
$\clh_u(V) = \clh \ominus \clh_s(V)$ and
\[
V = \begin{bmatrix} V_s & 0\\ 0 & V_u
\end{bmatrix} \in \clb(\clh_s(V) \oplus \clh_u(V)),
\]
where $V_s = V|_{\clh_s(V)}$ is a shift operator and $V_u = V|_{\clh_u(V)}$ is a unitary operator.
\end{theorem}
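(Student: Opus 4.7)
The plan is to build the reducing decomposition by hand from the wandering subspace $\clw(V) = \clh \ominus V\clh$, and then to show directly that $V$ restricts to a shift on one summand and to a unitary on the other.

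The first step is to set up the orthogonality. Since $V$ is an isometry, $V\clh$ is closed, so $\clh = \clw(V) \oplus V\clh$, and iterating this gives the telescoping identity
\[
\clh = \bigoplus_{m=0}^{n-1} V^m \clw(V) \;\oplus\; V^n \clh \qquad (n\geq 1),
\]
from which the subspaces $\{V^m\clw(V)\}_{m\geq 0}$ are pairwise orthogonal (for $j>k$, $\langle V^j w_1, V^k w_2\rangle = \langle V^{j-k} w_1, w_2\rangle = 0$ because $V^{j-k}w_1 \in V\clh \perp \clw(V)$). This lets one realize $\clh_s(V) = \bigoplus_{m=0}^\infty V^m\clw(V)$ as a genuine internal orthogonal sum.

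Next I would verify that $\clh_s(V)$ is reducing. $V$-invariance is immediate from the definition; $V^*$-invariance follows from $V^*w = 0$ for $w\in \clw(V)=\ker V^*$ together with $V^*V^m = V^{m-1}$ for $m\geq 1$. Consequently $\clh_u(V) = \clh \ominus \clh_s(V)$ is also reducing. The restriction $V_s := V|_{\clh_s(V)}$ is then a shift by construction, carrying the $m$-th summand isometrically onto the $(m+1)$-th, and purity $V_s^{*n}\to 0$ strongly follows from the tail-sum estimate $\|V_s^{*n}(\sum_m V^m w_m)\|^2 = \sum_{m\geq n}\|w_m\|^2 \to 0$.

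The step that genuinely needs care — and which I expect to be the main obstacle — is showing that $V_u := V|_{\clh_u(V)}$ is unitary, i.e. surjective. The telescoping identity rewrites $\clh_u(V) = \bigcap_{n\geq 0} V^n\clh$. Given $x\in\clh_u(V) \subseteq V\clh$, I would set $y := V^*x$ so that $x = Vy$, and then for each $n\geq 1$ write $x = V^{n+1}z_n$ and apply $V^*$ to get $y = V^n z_n \in V^n\clh$; thus $y\in\bigcap_n V^n\clh = \clh_u(V)$, which gives $\clh_u(V) \subseteq V\clh_u(V)$ and hence unitarity of $V_u$. Every other part of the argument is just careful bookkeeping of the orthogonal decomposition, so this surjectivity step is the only place where the isometry hypothesis is used in an essential way beyond the initial telescoping.
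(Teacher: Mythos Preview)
Your argument is correct and is essentially the standard textbook proof of the Wold--von Neumann decomposition. However, the paper does not actually prove Theorem~\ref{thm-Wold}: it is stated as a classical result with references to von Neumann, Wold, and Halmos, and no proof is supplied in the paper itself. So there is nothing to compare your argument against --- you have simply filled in a proof that the authors chose to omit as well known.
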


We will refer to this decomposition as the \textit{Wold-von Neumann
orthogonal decomposition} of $V$.

Recall that the \textit{Hardy space} $H^2(\D)$ is the
Hilbert space of all analytic functions on the unit disc $\D$ with square
summable Taylor coefficients
(cf. \cite{H}, \cite{RR}). The Hardy space is also a reproducing
kernel Hilbert space corresponding to the Szeg\"{o} kernel
\[
\mathbb{S}(z, w) = (1 - z \bar{w})^{-1} \quad \quad (z, w \in \D).
\]
For any Hilbert space $\cle$, the $\cle$-valued Hardy space with
reproducing kernel
\[
\D \times \D \raro \clb(\cle), \; (z, w) \mapsto \mathbb{S}(z, w)
I_{\cle},
\]
can canonically be identified with the tensor product Hilbert space
$H^2(\D) \otimes \cle$. To simplify the notation, we often identify
$H^2(\D) \otimes \cle$ with the $\cle$-valued Hardy space
$H^2_{\cle}(\D)$. The space of $\clb(\cle)$-valued bounded
holomorphic functions on $\D$ will be denoted by
$H^\infty_{\clb(\cle)}(\D)$.

\NI Let $M_z^{\cle}$ denote the multiplication operator by the
coordinate function $z$ on $H^2_{\cle}(\D)$, that is
\[
(M_z^{\cle} f)(w) = w f(w) \quad \quad \quad (f \in H^2_{\cle}(\D),
w \in \D).
\]
Then $M_z^{\cle}$ is a shift operator and
\[
\clw(M_z^{\cle}) = \cle.
\]

\NI \textsf{To simplify the notation we often omit the superscript
and denote $M_z^{\cle}$ by $M_z$, if $\cle$ is clear from the
context.}

We now proceed to give an analytic description of the Wold-von
Neumann construction.

Let $V$ be an isometry on $\clh$, and let $\clh = \clh_s(V) \oplus
\clh_u(V)$ be the Wold-von Neumann orthogonal decomposition of $V$.
Define
\[
\Pi_V : \clh_s(V) \oplus \clh_u(V) \raro H^2_{\clw(V)}(\D) \oplus
\clh_u(V)
\]
by
\[
\Pi_V (V^m \eta \oplus f) = z^m \eta \oplus f \quad \quad (m \geq 0, \eta \in \clw(V),
f \in \clh_u(V)).
\]
Then $\Pi_V$ is a unitary and
\[
\Pi_V \begin{bmatrix} V_s & 0\\ 0 & V_u
\end{bmatrix} = \begin{bmatrix} M_z^{\clw(V)} & 0\\ 0 & V_u
\end{bmatrix} \Pi_V.
\]
In particular, if $V$ is a shift,
then $\clh_u(V) =\{0\}$ and hence
\[
\Pi_V V = M_z^{\clw(V)} \Pi_V.
\]
Therefore, an isometry $V$ on $\clh$ is a shift operator if and only
if $V$ is unitarily equivalent to $M_z^{\cle}$ on $H^2_{\cle}(\D)$,
where $\dim \cle = \dim \clw(V)$.

\textsf{In the sequel we denote by $(\Pi_V, M_z^{\clw(V)})$, or
simply by  $(\Pi_V, M_z)$, the Wold-von Neumann decomposition of the
pure isometry $V$ in the above sense.}

Let $\cle$ be a Hilbert space, and let $C$ be a bounded linear
operator on $H^2_{\cle}(\D)$. Then $C \in \{M_z\}^{'}$, that is, $C
M_z = M_z C$, if and only if (cf. \cite{NF})
\[
C = M_{\Theta}
\]
for some $\Theta \in H^{\infty}_{\clb(\cle)}(\D)$ and $(M_{\Theta}
f)(w) = \Theta(w) f(w)$ for all $f \in H^2_{\cle}(\D)$ and $w \in
\D$.

Now let $V$ be a pure isometry, and let $C \in \{V\}^{'}$. Let
$(\Pi_V, M_z)$ be the Wold-von Neumann decomposition of $V$, and let
$\clw = \clw(V)$. Since $\Pi_V C \Pi_V^*$ on $H^2_{\clw}(\D)$ is the
representation of $C$ on $\clh$ and $(\Pi_V C \Pi_V^*) M_z = M_z
(\Pi_V C \Pi_V^*)$, it follows that
\[
\Pi_V C \Pi_V^* = M_{\Theta},
\]
for some  $\Theta \in H^{\infty}_{\clb(\clw)}(\D)$. The main result
of this section is the following explicit representation of
$\Theta$.

\begin{theorem}\label{thm-commutator}
Let $V$ be a pure isometry on $\clh$, and let $C$ be a bounded
operator on $\clh$. Let $(\Pi_V, M_z)$ be the Wold-von Neumann
decomposition of $V$. Set $\clw = \clw(V)$, $M = \Pi_V C \Pi^{*}_V$
and let
\[
\Theta(w)= P_{\clw}(I_{\clh} - w V^{*})^{-1}C\mid_{\clw} \quad \quad
(w \in \D).
\]
Then
\[
C V = VC,
\]
if and only if $\Theta \in H^\infty_{\clb(\clw)}(\D)$ and
\[
M = M_{\Theta}.
\]
\end{theorem}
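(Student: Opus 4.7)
The plan is to prove the two directions separately, with the ``if'' direction being essentially a one-line intertwining argument and the ``only if'' direction reducing to an explicit computation of the multiplier symbol via the Wold-von Neumann decomposition.

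For the ``if'' direction, suppose $\Theta \in H^{\infty}_{\clb(\clw)}(\D)$ and $M = M_{\Theta}$. Since every multiplication operator on $H^2_{\clw}(\D)$ commutes with $M_z^{\clw}$, we have $M M_z = M_z M$. Because $\Pi_V$ is unitary with $\Pi_V V = M_z \Pi_V$ and $C = \Pi_V^* M \Pi_V$, this intertwining pulls back to $CV = VC$.

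For the ``only if'' direction, assume $CV = VC$. Since $\Pi_V$ intertwines $V$ with $M_z$, the operator $M$ commutes with $M_z$ on $H^2_{\clw}(\D)$. By the characterization of the commutant of the shift recalled just before the theorem, there exists $\Psi \in H^\infty_{\clb(\clw)}(\D)$ with $M = M_\Psi$. It remains to verify that $\Psi = \Theta$, where $\Theta$ is given by the stated formula; this will simultaneously show $\Theta \in H^\infty_{\clb(\clw)}(\D)$ and $M = M_\Theta$.

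To identify $\Psi$ with $\Theta$, fix $\eta \in \clw$. Since $V$ is pure, Theorem \ref{thm-Wold} gives $\clh = \mo_{m \ge 0} V^m \clw$, so $C\eta = \sum_{m \ge 0} V^m \eta_m$ for unique $\eta_m \in \clw$. Applying $\Pi_V$ produces
\[
(M\eta)(w) = (\Pi_V C\eta)(w) = \sum_{m \ge 0} w^m \eta_m,
\]
which must equal $\Psi(w)\eta$. On the other hand, using $\|V^*\| \le 1$ to expand $(I_{\clh} - wV^*)^{-1} = \sum_{m \ge 0} w^m V^{*m}$ in the strong operator topology for $w \in \D$, and noting that $V^*\eta_n = 0$ for $\eta_n \in \clw$ (because $\clw \perp V\clh$), a direct computation yields $V^{*m}(V^n \eta_n) = V^{n-m}\eta_n$ when $m \le n$ and $0$ otherwise. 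Consequently $P_{\clw} V^{*m} C\eta = \eta_m$, and
\[
\Theta(w)\eta = P_{\clw}(I_{\clh} - wV^*)^{-1}C\eta = \sum_{m \ge 0} w^m \eta_m = \Psi(w)\eta.
\]
Since $\eta \in \clw$ was arbitrary, $\Theta = \Psi$, which completes the proof.

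The main obstacle is the last computation: one needs to carefully combine the Neumann series for $(I_{\clh} - wV^*)^{-1}$ with the Wold expansion of $C\eta$, exploit the annihilation $V^*|_{\clw} = 0$, and then project onto $\clw$ to match the Taylor coefficients of $M\eta$. Everything else is standard once the commutant-of-shift theorem is in hand.
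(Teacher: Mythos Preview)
Your proof is correct and follows essentially the same approach as the paper: both invoke the commutant-of-the-shift characterization to obtain a multiplier, then identify its symbol by expanding $C\eta$ in the Wold decomposition $\clh = \mo_{m\ge 0} V^m\clw$ and reading off the Taylor coefficients as $\eta_m = P_{\clw} V^{*m} C\eta$. The only cosmetic difference is that you first name the multiplier $\Psi$ and then verify $\Psi = \Theta$, whereas the paper computes $\Theta(w)\eta$ directly; the underlying computation is identical.
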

\begin{proof}
Let $h \in \clh$. One can express $h$ as $h =
\displaystyle{\sum_{m=0}^{\infty}} V^m \eta_m$, for some $\eta_m \in
\clw$, $m \geq 0$ (as $\clh = \displaystyle{\mo_{m=0}^{\infty}}
V^{m} {\clw}$). Applying $P_{\clw} V^{*l}$ to both sides and using
the fact that $\clw = \clw(V) = \ker V^*$, we obtain $\eta_l =
P_{\clw} V^{*l} h$ for all $l \geq 0$. This implies, for any $h \in
\clh$,
\begin{equation}\label{eq-F}
h = \sum_{m=0}^{\infty} V^m P_{\clw} V^{*m} h.
\end{equation}
Now let $CV = VC$. Then there exists a bounded analytic function
$\Theta \in H^{\infty}_{\clb(\clw)}(\D)$ such that $\Pi_V C \Pi_V^*
= M_{\Theta}$. For each $w \in \mathbb{D}$ and $\eta \in \clw$ we
have
\[
\begin{split}
\Theta(w) \eta & = (M_{\Theta} \eta)(w)
\\
& = (\Pi_V C \Pi_V^* \eta) (w)
\\
& = (\Pi_V C \eta)(w),
\end{split}
\]
as $\Pi_V^* \eta = \eta$. Since in view of (\ref{eq-F})
\[
C \eta = \sum_{m=0}^{\infty} V^m P_{\clw} V^{*m} C \eta,
\]
it follows that
\[
\begin{split}
\Theta(w) \eta & = (\Pi_V (\sum_{m=0}^{\infty} V^m P_{\clw} V^{*m} C
\eta))(w)
\\
& = (\sum_{m=0}^{\infty} M_z^m (P_{\clw} V^{*m} C \eta))(w)
\\
& = \sum_{m=0}^{\infty} w^m (P_{\clw} V^{*m} C \eta)
\\
& = P_{\clw} (I_{\clh} - w V^*)^{-1} C \eta.
\end{split}
\]
Therefore
\[
\Theta(w) = P_{\clw} (I_{\clh} - w V^*)^{-1} C|_{\clw} \quad \quad
(w \in \D),
\]
as required. Finally, since the sufficient part is trivial, the
proof is complete.
\end{proof}

Note that in the above proof we have used the standard projection
formula (see, for example, Rosenblum and Rovnyak \cite{RR})
$I_{\clh} = \displaystyle{\mbox{SOT}-\sum_{m=0}^\infty} V^m P_{\clw}
V^{*m}$. It may also be observed that $\|w V^*\| = |w| \|V\| < 1$
for all $w \in \D$, and so it follows that the function $\Theta$
defined in Theorem \ref{thm-commutator} is a $\clb(\clw)$-valued
holomorphic function in the unit disc $\D$. However, what is not
guaranteed in general here is that the function $\Theta$ is in
$H^\infty_{\clb(\clw)}(\D)$. The above theorem says that this is so
if and only if $CV = VC$.

\section{Berger, Coburn and Lebow representations}

This section is devoted to a detailed study of Berger, Coburn and
Lebow's representation of pure pairs of commuting isometries. Our
approach is different and yields sharper results, along with new
proofs, in terms of explicit coefficients of the one variable
polynomials associated with the class of pure pairs of commuting
isometries. Before dealing more specifically with pure pairs of
commuting isometries we begin with some general observations about
pairs of commuting isometries.

Let $(V_1, V_2)$ be a pair of commuting isometries on a Hilbert
space $\clh$. \textsf{In the sequel, we will adopt the following
notations}:
\[
V = V_1 V_2,
\]
\[
\clw = \clw(V) = \clw(V_1 V_2) = \clh \ominus V_1 V_2 \clh,
\]
and
\[
\clw_j = \clw(V_j) = \clh \ominus V_j \clh \quad \quad \quad (j = 1,
2).
\]

A pair of commuting isometries $(V_1, V_2)$ on $\clh$ is said to be
\textit{pure} if $V$ is a pure isometry.

The following useful lemma on wandering subspaces for commuting
isometries is simple.

\begin{lemma}\label{lem-U}
Let $(V_1, V_2)$ be a pair of commuting isometries on a Hilbert
space $\clh$. Then
\[
\clw = \clw_1 \oplus V_1 \clw_2 = V_2 \clw_1 \oplus \clw_2,
\]
and the operator $U$ on $\clw$ defined by
\[
U(\eta_1 \oplus V_1 \eta_2) = V_2 \eta_1 \oplus \eta_2,
\]
for $\eta_1 \in \clw_1$ and $\eta_2 \in \clw_2$, is a unitary
operator. Moreover,
\[
P_{\clw} V_i = V_i P_{\clw_j} \quad \quad (i \neq j).
\]
\end{lemma}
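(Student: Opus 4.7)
The plan is to establish the two orthogonal decompositions of $\clw$ first, then to read off the unitarity of $U$ directly from them, and finally to derive the intertwining identity by decomposing an arbitrary vector of $\clh$.

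For the first decomposition, I would start from the elementary identities $\clh = \clw_1 \oplus V_1 \clh$ and $\clh = \clw_2 \oplus V_2 \clh$. Applying $V_1$ to the second one gives $V_1 \clh = V_1 \clw_2 \oplus V_1 V_2 \clh$ (using that $V_1$ is an isometry so preserves orthogonality). Substituting back yields
\[
\clh = \clw_1 \oplus V_1 \clw_2 \oplus V_1 V_2 \clh,
\]
and since by definition $\clw = \clh \ominus V_1 V_2 \clh$, this gives $\clw = \clw_1 \oplus V_1 \clw_2$. The orthogonality $\clw_1 \perp V_1 \clw_2$ comes from $V_1 \clw_2 \subset V_1 \clh \perp \clw_1$. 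Using $V_1 V_2 = V_2 V_1$ and swapping the roles of $V_1, V_2$ in the same argument gives the second decomposition $\clw = V_2 \clw_1 \oplus \clw_2$.

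Next, the two decompositions make $U$ transparently unitary. Every element of $\clw$ is uniquely of the form $\eta_1 \oplus V_1 \eta_2$ with $\eta_j \in \clw_j$ (uniqueness from the isometry $V_1$ and from the directness of the sum), so $U$ is well defined. A one-line norm computation shows
\[
\|U(\eta_1 \oplus V_1 \eta_2)\|^2 = \|V_2\eta_1\|^2 + \|\eta_2\|^2 = \|\eta_1\|^2 + \|\eta_2\|^2 = \|\eta_1 \oplus V_1 \eta_2\|^2,
\]
using orthogonality in both decompositions and that $V_1, V_2$ are isometries. Surjectivity is immediate: any $w \in \clw$ equals $V_2 \alpha_1 \oplus \alpha_2$ by the second decomposition, so $w = U(\alpha_1 \oplus V_1 \alpha_2)$.

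For the intertwining identity $P_\clw V_i = V_i P_{\clw_j}$ with $i \neq j$, I would take $i=1, j=2$ (the other case is symmetric). Given $h \in \clh$, write $h = P_{\clw_2} h + (I - P_{\clw_2})h$ where $(I-P_{\clw_2})h \in V_2 \clh$. Applying $V_1$ and using $V_1 V_2 = V_2 V_1$,
\[
V_1 h = V_1 P_{\clw_2} h + V_1 (I - P_{\clw_2})h,
\]
with $V_1 P_{\clw_2} h \in V_1 \clw_2 \subset \clw$ and $V_1(I - P_{\clw_2})h \in V_1 V_2 \clh = (V_1 V_2 \clh) \perp \clw$. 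Projecting onto $\clw$ therefore yields $P_\clw V_1 h = V_1 P_{\clw_2} h$. I do not expect any real obstacle here; the only mildly delicate point is to remember that each step relies on commutativity of $V_1$ and $V_2$, which is what allows $V_1 V_2 \clh$ to appear on both sides of the orthogonal comparisons.
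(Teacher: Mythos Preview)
Your proof is correct and follows essentially the same approach as the paper: the paper compresses your subspace decomposition $\clh = \clw_1 \oplus V_1\clw_2 \oplus V\clh$ into the single projection identity $I - VV^* = (I - V_1 V_1^*) + V_1(I - V_2 V_2^*)V_1^*$, and obtains the intertwining relation from the equivalent operator identity $(I - VV^*)V_i = V_i(I - V_jV_j^*)$, which is exactly your argument written in terms of projections rather than spaces. Your version is more explicit but the mathematical content is identical.
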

\begin{proof}
The first equality follows from
\[
I - V V^* = (I - V_1 V_1^*) \oplus V_1 (I - V_2 V_2^*) V_1^* = V_2
(I - V_1 V_1^*) V_2^* \oplus (I - V_2 V_2^*).
\]
The second part directly follows from the first part, and the last
claim follows from $(I - V V^*) V_i = V_i (I - V_j V_j^*)$ for all
$i \neq j$. This concludes the proof of the lemma.
\end{proof}

Let $(V_1, V_2)$ be a pure pair of commuting isometries on a Hilbert
space $\clh$, and let $(\Pi_V, M_z)$ be the Wold-von Neumann
decomposition of $V$. Since
\[
V V_i = V_i V \quad \quad \quad  (i = 1, 2),
\]
there exist isometric multipliers (that is, inner functions
\cite{NF}) ${\Phi_1}$ and ${\Phi_2}$ in $H^\infty_{\clb(\clw)}(\D)$
such that
\[
\Pi_V V_i = M_{{\Phi_i}} \Pi_V \quad \quad \quad (i = 1, 2).
\]
In other words, $(M_{{\Phi_1}}, M_{{\Phi_2}})$ on $H^2_{\clw}(\D)$
is the representation of $(V_1, V_2)$ on $\clh$. Following Berger,
Coburn and Lebow \cite{BCL}, we say that $(M_{{\Phi_1}},
M_{{\Phi_2}})$ is the \textit{BCL representation} of $(V_1, V_2)$,
or simply the \textit{BCL pair} corresponding to $(V_1, V_2)$.

We now present an explicit description of the BCL pair
$(M_{{\Phi_1}}, M_{{\Phi_2}})$.

\begin{theorem}\label{thm-BCLnew}
Let $(V_1, V_2)$ be a pure pair of commuting isometries on a Hilbert
space $\clh$, and let $(M_{{\Phi_1}}, M_{{\Phi_2}})$ be the BCL
representation of $(V_1, V_2)$. Then
\[
{\Phi_1}(z) = V_1|_{\clw_2} \oplus V_2^*|_{V_2 \clw_1} z, \; {\Phi_2}(z)
= V_2|_{\clw_1} \oplus V_1^*|_{V_1 \clw_2} z,
\]
for all $z \in \D$.
\end{theorem}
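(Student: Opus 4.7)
The natural starting point is Theorem \ref{thm-commutator}, applied to the pure isometry $V = V_1 V_2$ with $C = V_1$ and $C = V_2$ respectively. Since each $V_i$ commutes with $V$, that theorem immediately gives
\[
\Phi_i(w) \;=\; P_{\clw}(I_{\clh} - wV^*)^{-1}V_i\big|_{\clw} \qquad (w\in\D,\; i=1,2).
\]
So the whole task reduces to evaluating this formula explicitly. The key structural input will be Lemma \ref{lem-U}, which supplies the two decompositions $\clw = \clw_1 \oplus V_1\clw_2 = V_2\clw_1 \oplus \clw_2$; to recover the claimed summand structure of $\Phi_1$, I would work with the second of these, and for $\Phi_2$ with the first.

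\textbf{Key computation.} Using $V^* = V_1^*V_2^* = V_2^*V_1^*$ and $V_i^*V_i = I$, I would first record that for $m\ge 1$,
\[
V^{*m}V_1 \;=\; V_1^{*(m-1)}V_2^{*m},
\]
and then test this on the two summands of $\clw = \clw_2 \oplus V_2\clw_1$. If $\eta \in \clw_2 = \ker V_2^*$, then $V^{*m}V_1\eta = 0$ already for $m\ge 1$, so the Neumann series collapses to $V_1\eta$; moreover $V_1\eta \in V_1\clw_2 \subset \clw$, so $P_{\clw}$ is the identity here and $\Phi_1(w)\eta = V_1\eta$. If instead $\eta = V_2\alpha$ with $\alpha \in \clw_1 = \ker V_1^*$, then $V_1\eta = V\alpha$, and one checks $V^{*m}(V\alpha) = V^{*(m-1)}\alpha$, which again vanishes for $m\ge 2$ since $V_1^*\alpha = 0$. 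Hence $(I-wV^*)^{-1}V_1\eta = V\alpha + w\alpha$; the first term lies in $V\clh = \clw^\perp$ and so is killed by $P_{\clw}$, while $\alpha \in \clw_1 \subset \clw$, giving $\Phi_1(w)(V_2\alpha) = w\alpha = wV_2^*|_{V_2\clw_1}(V_2\alpha)$. Assembling the two cases yields the stated formula for $\Phi_1$, and the formula for $\Phi_2$ follows by interchanging the roles of $V_1$ and $V_2$ (and using the decomposition $\clw = \clw_1\oplus V_1\clw_2$).

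\textbf{Main obstacle.} There is no deep obstruction: Theorem \ref{thm-commutator} already does the heavy lifting by reducing $\Phi_i$ to a closed-form expression, and the Neumann series terminates after the first power of $w$ because of the kernel conditions $V_j^*|_{\clw_j}=0$. The only care needed is bookkeeping, namely (i) verifying that all summands $\clw_1,\clw_2,V_1\clw_2,V_2\clw_1$ actually sit inside $\clw = \ker V^*$, so that the projection $P_{\clw}$ behaves as expected, and (ii) confirming that the resulting $\Theta$ is a polynomial and hence automatically lies in $H^\infty_{\clb(\clw)}(\D)$, so that the hypotheses of Theorem \ref{thm-commutator} are fulfilled and $M_{\Phi_i}$ genuinely represents $V_i$ under the Wold--von Neumann identification $\Pi_V$.
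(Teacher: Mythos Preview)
Your argument is correct, and the underlying geometry is identical to the paper's: decompose $\eta\in\clw$ along $\clw=V_2\clw_1\oplus\clw_2$ (Lemma~\ref{lem-U}), apply $V_1$, and track where the two pieces land. The organizational difference is that you route the computation through Theorem~\ref{thm-commutator} and then collapse the Neumann series $P_{\clw}(I-wV^*)^{-1}V_1|_{\clw}$ term by term, whereas the paper bypasses Theorem~\ref{thm-commutator} altogether and works directly with the intertwining relation $\Pi_V V_1 = M_{\Phi_1}\Pi_V$: writing $\eta=V_2\eta_1\oplus\eta_2$, it computes $V_1\eta=V\eta_1+V_1\eta_2$ and then reads off $\Pi_V(V\eta_1)=z\eta_1$ and $\Pi_V(V_1\eta_2)=V_1\eta_2$ (the latter because $V_1\eta_2\in\clw$). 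Your approach has the virtue of exhibiting Theorem~\ref{thm-BCLnew} as a direct specialization of the general commutator formula, which is conceptually pleasing; the paper's approach is marginally shorter since it never needs to expand the resolvent. One small wording issue: in your point~(ii) you phrase membership in $H^\infty_{\clb(\clw)}(\D)$ as a hypothesis to be checked, but in Theorem~\ref{thm-commutator} it is a \emph{conclusion} once $V_iV=VV_i$ is known---so that step is automatic, not a verification you owe.
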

\begin{proof}
Let $\eta$ in $\clw = V_2 \clw_1 \oplus \clw_2$, and let $w \in \D$.
Then there exist $\eta_1 \in \clw_1$ and $\eta_2 \in \clw_2$ such
that $\eta = V_2 \eta_1 \oplus \eta_2$. Then $ V_1 \eta = V \eta_1 +
V_1 \eta_2$, and hence
\[
{\Phi_1}(w) \eta  = (M_{{\Phi_1}}\eta)(w) = (\Pi_V V_1 \Pi_V^*
\eta)(w) = (\Pi_V V_1 \eta)(w) = (\Pi_V V \eta_1 + \Pi_V V_1
\eta_2)(w).
\]
This along with the fact that $V_1 \eta_2 \in \clw$ (see Lemma
\ref{lem-U}) gives
\[
\begin{split} {\Phi_1}(w) \eta & = (M_z \Pi_V \eta_1 + V_1 \eta_2) (w)
\\
& = (M_z \eta_1 + V_1 \eta_2) (w)
\\
& = w \eta_1 + V_1 \eta_2\\
& = wV_2^{*}\eta + V_1 \eta_2,
\end{split}
\]
for all $w \in \D$. Therefore
\[
\begin{split}
\Phi_1(z) 
&  = V_1|_{\clw_2} \oplus V_2^*|_{V_2 \clw_1} z,
\end{split}
\]
for all $z \in \D$, as $\clw_2 = Ker(V_2^{*})$. The representation
of $\Phi_2$ follows similarly.
\end{proof}

In the following, we present Berger, Coburn and Lebow's version of
representations of pure pairs of commuting isometries. This yields
an explicit representations of the auxiliary operators $U$ and $P$
(see Section 1). The proof readily follows from Lemma \ref{lem-U}
and Theorem \ref{thm-BCLnew}.

\begin{theorem}\label{thm-BCLold}
Let $(V_1, V_2)$ be a pure pair of commuting isometries on $\clh$.
Then the BCL pair $(M_{{\Phi_1}}, M_{{\Phi_2}})$ corresponding to
$(V_1, V_2)$ is given by
\[
{\Phi_1}(z)= U^* (P_{\clw_2} + zP_{\clw_2}^{\perp}),
\]
and
\[
{\Phi_2}(z) = (P_{\clw_2}^{\perp} + zP_{\clw_2}) U,
\]
where
\[
U=
\begin{bmatrix}
V_2|_{\clw_1} & 0\\
0 & V_1^{*}|_{V_1\clw_2} \end{bmatrix} :
\begin{array}{c}
\clw_1\\
\oplus\\ V_1\clw_2
\end{array}
\raro
\begin{array}{c}
V_2 \clw_1 \\
\oplus\\ \clw_2
\end{array},
\]
is a unitary operator on $\clw$.
\end{theorem}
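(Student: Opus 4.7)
My plan is to verify, by direct computation on $\clw$, that the formulas for $\Phi_1(z)$ and $\Phi_2(z)$ displayed in the statement coincide with the expressions already established in Theorem \ref{thm-BCLnew}. The essential inputs are the two orthogonal decompositions $\clw = \clw_1 \oplus V_1\clw_2 = V_2\clw_1 \oplus \clw_2$ supplied by Lemma \ref{lem-U}, together with the unitary $U$ defined there and the realization of $P_{\clw_2}$ on $\clw$ as the projection onto $\clw_2$ along $V_2\clw_1$.

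First I would confirm that the unitary $U$ from Lemma \ref{lem-U} indeed has the advertised block-diagonal form. Since $U(\eta_1 \oplus V_1\eta_2) = V_2\eta_1 \oplus \eta_2$ for $\eta_1 \in \clw_1$ and $\eta_2 \in \clw_2$, the first summand acts as $\eta_1 \mapsto V_2\eta_1$, that is, as $V_2|_{\clw_1} : \clw_1 \raro V_2\clw_1$, while the second summand acts as $V_1\eta_2 \mapsto \eta_2 = V_1^*(V_1\eta_2)$, that is, as $V_1^*|_{V_1\clw_2} : V_1\clw_2 \raro \clw_2$. This yields the matrix form claimed for $U$, and a short adjoint computation then shows that $U^*$ acts as $V_2^*$ on $V_2\clw_1$ and as $V_1$ on $\clw_2$.

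The remainder is a direct match. For $\Phi_2(z)$, take $\xi = \eta_1 \oplus V_1\eta_2 \in \clw_1 \oplus V_1\clw_2 = \clw$. Then $U\xi = V_2\eta_1 \oplus \eta_2 \in V_2\clw_1 \oplus \clw_2 = \clw$, and viewing the projections through this second decomposition gives $P_{\clw_2} U\xi = \eta_2$ and $P_{\clw_2}^\perp U\xi = V_2\eta_1$, whence $(P_{\clw_2}^\perp + zP_{\clw_2})U\xi = V_2\eta_1 + z\eta_2$. This is exactly the action of $V_2|_{\clw_1} \oplus V_1^*|_{V_1\clw_2}z$ on $\xi$ prescribed by Theorem \ref{thm-BCLnew}. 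Symmetrically, for $\xi = V_2\eta_1 \oplus \eta_2 \in V_2\clw_1 \oplus \clw_2 = \clw$, one computes $(P_{\clw_2} + zP_{\clw_2}^\perp)\xi = \eta_2 + zV_2\eta_1$, and then applying $U^*$ using the description above yields $U^*(P_{\clw_2} + zP_{\clw_2}^\perp)\xi = V_1\eta_2 + z\eta_1$, which agrees with the action of $V_1|_{\clw_2} \oplus V_2^*|_{V_2\clw_1}z$ on $\xi$.

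The only thing to keep careful track of is the switch between the two orthogonal decompositions of $\clw$ each time one crosses $U$ or $U^*$; once that bookkeeping is in place, no further analytic input is required, which is why the theorem \emph{readily} follows from Lemma \ref{lem-U} and Theorem \ref{thm-BCLnew}.
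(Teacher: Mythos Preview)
Your proposal is correct and follows exactly the route the paper intends: the paper offers no detailed argument beyond the remark that the result ``readily follows from Lemma \ref{lem-U} and Theorem \ref{thm-BCLnew},'' and your computation is precisely the verification one performs to see this. Your careful bookkeeping of the two decompositions $\clw = \clw_1 \oplus V_1\clw_2 = V_2\clw_1 \oplus \clw_2$ and of the action of $U$ and $U^*$ across them is the heart of the matter, and it is done correctly.
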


Therefore, $(V_1, V_2, V_1 V_2)$ on $\clh$ and $(M_{\Phi_1},
M_{\Phi_2}, M_z^{\clw})$ on $H^2_{\clw}(\D)$ are unitarily
equivalent, where $\clw$ is the wandering subspace for $V = V_1
V_2$.

\section{Unitary invariants}

In this short section we present a complete set of joint unitary
invariants for pure pairs of commuting isometries. Recall that two
commuting pairs $(T_1, T_2)$ and $(\tilde{T}_1, \tilde{T}_2)$ on
$\clh$ and $\tilde{\clh}$, respectively, are said to be (jointly)
unitarily equivalent if there exists a unitary operator $U : \clh
\raro \tilde{\clh}$ such that $U T_j = \tilde{T}_j U$ for all $j =
1, 2$.

First we note that, by virtue of Theorem 2.9 of \cite{BDF3}, the
orthogonal projection $P_{\clw_2}$ and the unitary operator $U$ on
$\clw$, as in Theorem \ref{thm-BCLold}, form a complete set of (joint)
unitary invariants of pure pairs of commuting isometries. More
specifically: Let $(V_1, V_2)$ and $(\tilde{V}_1, \tilde{V}_2)$ be
two pure pairs of commuting isometries on $\clh$ and $\tilde{\clh}$,
respectively. Let $\tilde{\clw}_j$ be the wandering subspace for
$\tilde{V}_j$, $j = 1, 2$. Then $(V_1, V_2)$ and $(\tilde{V}_1,
\tilde{V}_2)$ are unitarily equivalent if and only if
\[
( \begin{bmatrix}
V_2|_{\clw_1} & 0\\
0 & V_1^{*}|_{V_1\clw_2} \end{bmatrix}, P_{\clw_2}) \quad \mbox{and}
\quad
(\begin{bmatrix} \tilde{V}_2|_{\tilde{\clw}_1} & 0\\
0 & \tilde{V}_1^{*}|_{\tilde{V}_1 \tilde{{\clw}_2}} \end{bmatrix},
P_{\tilde{\clw}_2})
\]
are unitarily equivalent.

In addition to the above, the following unitary invariants are also
explicit. The proof is an easy consequence of Theorem
\ref{thm-BCLnew}. Here we will make use of the identifications of
$A$ on $H^2_{\clw}(\D)$ and $A M_z$ on $H^2_{\clw}(\D)$ with
$I_{H^2(\D)} \otimes A$ on $H^2(\D) \otimes \clw$ and $M_z \otimes
A$ on $H^2(\D) \otimes \clw$, respectively, where $A \in \clb(\clw)$
(see Section 2).

\begin{theorem}
Let $(V_1, V_2)$ and $(\tilde{V}_1, \tilde{V}_2)$ be two pure pairs
of commuting isometries on $\clh$ and $\tilde{\clh}$, respectively.
Then $(V_1, V_2)$ and $(\tilde{V}_1, \tilde{V}_2)$ are unitarily
equivalent if and only if $(V_1|_{\clw_2}, V_2^*|_{V_2 \clw_1})$ and
$(\tilde{V}_1|_{\tilde{\clw_2}}, \tilde{V}_2^*|_{\tilde{V}_2
\tilde{\clw}_1})$ are unitarily equivalent.
\end{theorem}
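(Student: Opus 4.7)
The plan is to reduce the question to the explicit Berger--Coburn--Lebow representation from Theorem~\ref{thm-BCLnew}. For the forward direction, if $\tau\colon\clh\to\tilde\clh$ is a unitary with $\tau V_j=\tilde V_j\tau$ for $j=1,2$, then $\tau$ automatically carries $\clw_j=\ker V_j^*$ onto $\tilde\clw_j$ and $V_2\clw_1$ onto $\tilde V_2\tilde\clw_1$ (since $\tau$ also intertwines $V=V_1V_2$ with $\tilde V$, it sends $\clw$ onto $\tilde\clw$). The restriction $\tau|_{\clw}\colon\clw\to\tilde\clw$ is therefore a single unitary that simultaneously intertwines $V_1|_{\clw_2}$ with $\tilde V_1|_{\tilde\clw_2}$ and $V_2^*|_{V_2\clw_1}$ with $\tilde V_2^*|_{\tilde V_2\tilde\clw_1}$.

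For the converse I will start from a unitary $\sigma\colon\clw\to\tilde\clw$ with $\sigma(\clw_2)=\tilde\clw_2$ and $\sigma(V_2\clw_1)=\tilde V_2\tilde\clw_1$ delivering the assumed intertwining. The key observation is that, by Lemma~\ref{lem-U}, the restrictions $V_1|_{\clw_2}\colon\clw_2\to V_1\clw_2$ and $V_2|_{\clw_1}\colon\clw_1\to V_2\clw_1$ are unitary isomorphisms between pieces of $\clw$, and similarly in $\tilde\clw$. Taking adjoints of the two hypothesized intertwining identities and composing with these unitary isomorphisms, I will deduce that $\sigma$ also intertwines the ``dual'' pair $(V_2|_{\clw_1},V_1^*|_{V_1\clw_2})$ with $(\tilde V_2|_{\tilde\clw_1},\tilde V_1^*|_{\tilde V_1\tilde\clw_2})$; in particular $\sigma(\clw_1)=\tilde\clw_1$ and $\sigma(V_1\clw_2)=\tilde V_1\tilde\clw_2$, so $\sigma$ also respects the second decomposition $\clw=\clw_1\oplus V_1\clw_2$ of Lemma~\ref{lem-U}.

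Once this is in place, the explicit formulas of Theorem~\ref{thm-BCLnew} for $\Phi_1(z), \Phi_2(z)$ (and their tilded analogues) immediately give
\[
\sigma\,\Phi_j(z)=\tilde\Phi_j(z)\,\sigma \qquad (z\in\D,\ j=1,2),
\]
since each $\Phi_j(z)$ splits as a constant operator plus $z$ times another and $\sigma$ has just been shown to intertwine both summands. Using the identifications $A\leftrightarrow I_{H^2(\D)}\otimes A$ and $AM_z\leftrightarrow M_z\otimes A$ recalled in Section~2, the operator $I_{H^2(\D)}\otimes\sigma\colon H^2_{\clw}(\D)\to H^2_{\tilde\clw}(\D)$ is then a unitary intertwining $(M_{\Phi_1},M_{\Phi_2})$ with $(M_{\tilde\Phi_1},M_{\tilde\Phi_2})$. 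Conjugating by the Wold--von Neumann unitaries $\Pi_V$ and $\Pi_{\tilde V}$ yields a unitary $\clh\to\tilde\clh$ intertwining $(V_1,V_2)$ with $(\tilde V_1,\tilde V_2)$.

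The only genuinely nontrivial step is the adjoint-bookkeeping at the start of the converse: showing that the hypothesized equivalence of a single pair automatically forces equivalence of the ``dual'' pair governing $\Phi_2$, and that $\sigma$ respects the second decomposition of $\clw$. Everything after that is a matter of unpacking Theorem~\ref{thm-BCLnew} and applying the standard tensor-product identification.
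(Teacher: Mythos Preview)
Your proposal is correct and close in spirit to the paper's proof, but the two directions are handled somewhat differently, so a short comparison is worthwhile.

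For the forward (necessary) direction you argue more directly than the paper: you simply restrict an intertwining unitary $\tau:\clh\to\tilde\clh$ to $\clw$. The paper instead passes to the BCL models, observes that any intertwiner $X:H^2_{\clw}(\D)\to H^2_{\tilde\clw}(\D)$ must commute with $M_z$ (because $M_z=M_{\Phi_1}M_{\Phi_2}$), hence $X=I_{H^2(\D)}\otimes Z$, and then reads off the coefficient intertwining. Your route is shorter; the paper's route has the side benefit of making explicit that the $Z$ one obtains is exactly the restriction to constants.

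For the converse (sufficient) direction your ``adjoint bookkeeping'' is precisely the coefficient-level version of the paper's key identity $M_{\Phi_2}=M_z M_{\Phi_1}^*$. Indeed, writing $\Phi_1(z)=C_1+zC_2$ with $C_1=V_1|_{\clw_2}$ and $C_2=V_2^*|_{V_2\clw_1}$ (viewed as operators on $\clw$, extended by zero on the complementary summand), one checks directly that $C_2^*=V_2|_{\clw_1}$ and $C_1^*=V_1^*|_{V_1\clw_2}$, so $\Phi_2(z)=C_2^*+zC_1^*$. Thus $\sigma C_j=\tilde C_j\sigma$ immediately gives $\sigma C_j^*=\tilde C_j^*\sigma$ and hence $\sigma\Phi_2(z)=\tilde\Phi_2(z)\sigma$; no additional ``composition with unitary isomorphisms'' is actually needed. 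The paper reaches the same conclusion in one line from $M_{\Phi_2}=M_z M_{\Phi_1}^*$.

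One small point you should make explicit: you assume at the outset that $\sigma(\clw_2)=\tilde\clw_2$ and $\sigma(V_2\clw_1)=\tilde V_2\tilde\clw_1$. This is not part of the hypothesis but follows, since $V_2\clw_1=\ker C_1$ and $\sigma C_1=\tilde C_1\sigma$ forces $\sigma(\ker C_1)=\ker\tilde C_1$; the orthogonal complements then match as well. The analogous conclusion $\sigma(\clw_1)=\tilde\clw_1$ follows the same way from $\clw_1=\ker C_1^*$.
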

\begin{proof}
Let $(M_{\Phi_1}, M_{\Phi_2})$ and $(M_{\tilde{\Phi}_1},
M_{\tilde{\Phi}_2})$ be the BCL pairs corresponding to $(V_1, V_2)$
and $(\tilde{V}_1, \tilde{V}_2)$, respectively, as in Theorem
\ref{thm-BCLnew}. Let $C_1 = V_1|_{\clw_2}$ and $C_2 = V_2^*|_{V_2
\clw_1}$ be the coefficients of $\Phi_1$. Similarly, let
$\tilde{C}_1$ and $\tilde{C}_2$ be the coefficients of
$\tilde{\Phi}_1$.

\NI Now let $Z : \clw \raro \tilde{\clw}$ be a unitary such that $Z
C_j = \tilde{C}_j Z$, $j = 1, 2$. Then
\[
\begin{split}
M_{\Phi_1} & = I_{H^2(\D)} \otimes C_1 + M_z \otimes C_2
\\
& = I_{H^2(\D)} \otimes Z^* \tilde{C}_1 Z + M_z \otimes Z^*
\tilde{C}_2 Z
\\
& = (I_{H^2(\D)} \otimes Z^*) (I_{H^2(\D)} \otimes \tilde{C}_1 + M_z
\otimes \tilde{C}_2) (I_{H^2(\D)} \otimes Z)
\\
& = (I_{H^2(\D)} \otimes Z^*) M_{\tilde{\Phi}_1} (I_{H^2(\D)}
\otimes Z).
\end{split}
\]
Because $M_{\Phi_2} = M_z M_{{\Phi}_1}^*$ and $M_{\tilde{\Phi}_2} =
M_z M_{\tilde{\Phi}_1}^*$, it follows that $(M_{\Phi_1},
M_{\Phi_2})$ and $(M_{\tilde{\Phi}_1}, M_{\tilde{\Phi}_2})$ are
unitarily equivalent, that is, $(V_1, V_2)$ and $(\tilde{V}_1,
\tilde{V}_2)$ are unitarily equivalent.

\NI To prove the necessary part, let $(M_{\Phi_1}, M_{\Phi_2})$ and
$(M_{\tilde{\Phi}_1}, M_{\tilde{\Phi}_2})$ are unitarily equivalent.
Then there exists a unitary operator $X : H^2_{\clw}(\D) \raro
H^2_{\tilde{\clw}}(\D)$ \cite{RR} such that
\[
X M_{\Phi_j} = M_{\tilde{\Phi}_j} X \quad \quad (j = 1, 2).
\]
Since
\[
X M_z^{\clw}  = X M_{\Phi_1} M_{\Phi_2} = M_{\tilde{\Phi}_1} X X^*
M_{\tilde{\Phi}_2} X = M_{\tilde{\Phi}_1} M_{\tilde{\Phi}_2} X =
M_z^{\tilde{\clw}} X,
\]
there exists a unitary operator $Z : \clw \raro \tilde{\clw}$ such
that
\[
X = I_{H^2(\D)} \otimes Z.
\]
This and $X M_{\Phi_1} = M_{\tilde{\Phi}_1} X$ implies that
\[
(I_{H^2(\D)} \otimes Z) (I_{H^2(\D)} \otimes C_1 + M_z \otimes C_2)
= (I_{H^2(\D)} \otimes \tilde{C}_1 + M_z \otimes \tilde{C}_2)
(I_{H^2(\D)} \otimes Z).
\]
Hence $(C_1, C_2)$ and $(\tilde{C}_1, \tilde{C}_2)$ are unitarily
equivalent. This completes the proof of the theorem.
\end{proof}

Observe that the set of joint unitary invariants $\{V_1|_{\clw_2},
V_2^*|_{V_2 \clw_1}\}$, as above, is associated with the
coefficients of $\Phi_1$ of the BCL pair $(M_{\Phi_1}, M_{\Phi_2})$
corresponding to $(V_1, V_2)$. Clearly, by duality, a similar
statement holds for the coefficients of $\Phi_2$ as well:
$\{V_2|_{\clw_1}, V_1^*|_{V_1 \clw_2}\}$ is a complete set of joint
unitary invariants for pure pairs of commuting isometries.

\section{Pure isometries}

In this section we will analyze pairs of commuting isometries $(V_1,
V_2)$ such that either $V_1$ or $V_2$ is a pure isometry, or both
$V_1$ and $V_2$ are pure isometries. We begin with a concrete
example which illustrates this particular class and also exhibits its
complex structure.

Recall that the Hardy space $H^2(\D^2)$ over the bidisc $\D^2$ is
the Hilbert space of all analytic
functions on the bidisc $\D^2$ with square summable Taylor coefficients
(see Rudin \cite{RUD}). Let $M_{z_j}$ on $H^2(\D^2)$ be the
multiplication operator by the coordinate function $z_j$, $j = 1,
2$. Note that $(M_{z_1}, M_{z_2})$ on $H^2(\D^2)$ can be identified
with $(M_z \otimes I_{H^2(\D)}, I_{H^2(\D)} \otimes M_z)$ on
$H^2(\D) \otimes H^2(\D)$, and consequently, $(M_{z_1}, M_{z_2})$ on
$H^2(\D^2)$ is a pair of \textit{doubly commuting} (that is,
$M_{z_1}^* M_{z_2} = M_{z_2} M_{z_1}^*$) pure isometries.

\NI Now let $\cls$ be a joint $(M_{z_1}, M_{z_2})$-invariant closed
subspace of $H^2(\D^2)$, that is, $M_{z_j} \cls \subseteq \cls$. Set
\[
V_j = M_{z_j}|_{\cls} \quad \quad (j = 1, 2).
\]
It follows immediately that $V_j$ is a pure isometry and $V_1 V_2 =
V_2 V_1$, and hence $(V_1, V_2)$ is a pair of commuting pure
isometries on $\cls$.

\NI If we assume, in addition, that $(V_1, V_2)$ is doubly commuting
(that is, $V_1^* V_2 = V_2 V_1^*$), then it follows that $(V_1,
V_2)$ on $\cls$ and $(M_{z_1}, M_{z_2})$ on $H^2(\D^2)$ are
unitarily equivalent. See Slocinski \cite{SLO} for more details. In
general, however, the classification of pairs of commuting
isometries, up to unitary equivalence, is complicated and very
little seems to be known. For instance, see Rudin \cite{RUD} for a
list of pathological examples (also see Qin and Yang \cite{QY}).

We now turn our attention to the general problem. Let $(V_1, V_2)$
be a pair of commuting isometries on $\clh$, and let $V_1$ be a pure
isometry. Then, in particular, $V = V_1 V_2$ is a pure isometry, and
hence $(V_1, V_2)$ is a pure pair of commuting isometries. Since
$V_1 V_2 = V_2 V_1$, by Theorem \ref{thm-commutator}, it follows
that
\begin{equation}\label{eqn-pureV1}
\Pi_{V_1} V_2 = M_{\Theta_{V_2}} \Pi_{V_1},
\end{equation}
where $\Theta_{V_2} \in H^\infty_{\clb(\clw_1)}(\D)$ is an inner
multiplier and
\begin{equation}\label{eqn-theta12}
\Theta_{V_2}(z) = P_{\clw_1} (I_{\clh} - z V_1^*)^{-1} V_2|_{\clw_1}
\quad \quad (z \in \D).
\end{equation}
Let $(M_{{\Phi_1}}, M_{{\Phi_2}})$ be the BCL pair (see Theorem
\ref{thm-BCLold}) corresponding to $(V_1, V_2)$, that is, $\Pi_V V_i
= M_{{\Phi_i}} \Pi_V$ for all $i = 1, 2$. Set
\[
\TP_1 = \Pi_{V_1} \Pi_V^*.
\]
Then $\TP_1 : H^2_{\clw}(\D) \raro H^2_{\clw_1}(\D)$ is a unitary
operator such that $\TP_1 M_{{\Phi_1}}  = M_z^{\clw_1} \TP_1$ and
$\TP_1 M_{{\Phi_2}} = M_{\Theta_{V_2}} \TP_1$. Therefore, we have
the following commutative diagram:
\[
\xymatrix{
\mathcal H \ar[r]^{ \Pi_V} \ar[dr]_{\Pi_{V_1}} & H^2_{\clw}(\D) \ar[d]^{\TP_1} \\
 & H^2_{\clw_1}(\D)
}
\]
where $(M_{{\Phi_1}}, M_{{\Phi_2}})$ on $H^2_{\clw}(\D)$ and
$(M_z^{\clw_1}, M_{\Theta_{V_2}})$ on $H^2_{\clw_1}(\D)$ are the
representations of $(V_1, V_2)$ on $\clh$.

We now proceed to settle the non-trivial part of this consideration:
An analytic description of the unitary map $\TP_1$. To this end,
observe first that since $\Pi_{V_1} V_1 = M_z^{\clw_1} \Pi_{V_1}$,
(\ref{eqn-pureV1}) gives
\[
\Pi_{V_1} V = M_z^{\clw_1} M_{\Theta_{V_2}} \Pi_{V_1}.
\]
Then
\[
\TP_1 M_z^{\clw} = \Pi_{V_1} V \Pi_V^* = M_z^{\clw_1}
M_{\Theta_{V_2}} \Pi_{V_1} \Pi_V^*,
\]
that is,
\begin{equation}\label{eqn-TPw}
\TP_1 M_z^{\clw} =  (M_z^{\clw_1} M_{\Theta_{V_2}}) \TP_1.
\end{equation}
Let $\eta \in \clw$. By Equation (\ref{eq-F}) we can write $\eta =
\displaystyle{\sum_{m=0}^\infty} V_1^m P_{\clw_1} V_1^{*m} \eta$.
Therefore
\[
\begin{split}
(\Pi_{V_1} \eta)(w) & = (\sum_{m=0}^\infty \Pi_{V_1} V_1^m
P_{\clw_1} V_1^{*m} \eta)(w)
\\
& = (\sum_{m=0}^\infty M_{z}^m P_{\clw_1} V_1^{*m} \eta)(w)
\\
& = \sum_{m=0}^\infty w^m (P_{\clw_1} V_1^{*m} \eta),
\end{split}
\]
which yields
\[
\TP_1 \eta = \Pi_{V_1} \Pi_V^* \eta  = \Pi_{V_1} \eta =
\sum_{m=0}^\infty z^m (P_{\clw_1} V_1^{*m} \eta),
\]
that is
\[
\TP_1 \eta = P_{\clw_1} [I_{\clh} + z (I_{\clh} - z V_1^*)^{-1}
V_1^*] \eta,
\]
for all $\eta \in \clw$. It now follows from (\ref{eqn-TPw}) that
\[
\TP_1 (z^m \eta) = (z \Theta_{V_2}(z))^m P_{\clw_1} [I_{\clh} + z
(I_{\clh} - z V_1^*)^{-1} V_1^*] \eta,
\]
for all $m \geq 0$, and so, by $\mathbb{S}(\cdot, w) \eta =
\displaystyle{\sum_{m=0}^\infty} z^m \bar{w}^m \eta$, it follows
that
\[
\begin{split}
\TP_1(\mathbb{S}(\cdot, w) \eta) & = \TP_1 (\sum_{m=0}^\infty z^m
\bar{w}^m \eta)
\\
& = (I_{\clw_1} - \bar{w} z \Theta_{V_2}(z))^{-1} P_{\clw_1}
[I_{\clh} + z (I_{\clh} - z V_1^*)^{-1} V_1^*] \eta,
\end{split}
\]
for all $w \in \D$ and $\eta \in \clw$. Finally, from $\TP_1^*
M_z^{\clw_1} = M_{{\Phi_1}} \TP_1^*$ and $\TP_1^* \eta_1 = \eta_1$
for all $\eta_1 \in \clw_1$, it follows that $\TP_1^* (z^m \eta_1) =
M_{{\Phi_1}}^m \eta_1$ for all $m \geq 0$, and hence
\[
\TP_1^* (\mathbb{S}(\cdot, w) \eta_1) = (I_{\clw} - {\Phi_1}(z)
\bar{w})^{-1} \eta_1,
\]
for all $w \in \D$ and $\eta_1 \in \clw_1$.

We summarize the above observations in the following theorem.

\begin{theorem}\label{thm-one pure}
Let $(V_1, V_2)$ be a pair of commuting isometries on $\clh$. Let
$i, j \in \{ 1, 2 \}$ and $i \neq j$. If $V_i$ is a pure isometry,
then
\[
\TP_i = \Pi_{V_i} \Pi_V^* \in \clb(H^2_{\clw}(\D),
H^2_{\clw_i}(\D)),
\]
is a unitary operator,
\[
\TP_i M_z^{\clw} = M_{z \Theta_{V_j}} \TP_i, \; \TP_i^*
M_{^z}^{\clw_i} = M_{{\Phi_i}} \TP_i^*,
\]
and
\[
\TP_i(\mathbb{S}(\cdot, w) \eta) = (I_{\clw_i} - \bar{w} z
\Theta_{V_j}(z))^{-1} P_{\clw_i} [I_{\clh} + z (I - z V_i^*)^{-1}
V_i^*] \eta,
\]
for all $w \in \D$ and $\eta \in \clw$, where
\[
\Theta_{V_j}(z) = P_{\clw_i}(I_{\clh} - z V_i^*)^{-1} V_j|_{\clw_i}
\]
for all $z \in \D$. Moreover
\[
\TP_i^* (\mathbb{S}(\cdot, w) \eta_i) = (I_{\clw} - {\Phi_i}(z)
\bar{w})^{-1} \eta_i,
\]
for all $w \in \D$ and $\eta_i \in \clw_i$.
\end{theorem}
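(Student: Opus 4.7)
The plan is to reduce to the case $i=1, j=2$ by the obvious symmetry in the hypotheses and then assemble the statement from three ingredients that have already been prepared in the excerpt: (i) Theorem \ref{thm-commutator} applied to $C = V_2$ and the pure isometry $V_1$, (ii) the unitarity of the Wold maps $\Pi_V$ and $\Pi_{V_1}$, and (iii) the projection formula (\ref{eq-F}) together with the Szegő reproducing kernel expansion $\mathbb{S}(\cdot,w)\eta = \sum_{m\ge 0} \bar{w}^m z^m \eta$.

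First I would observe that $V = V_1V_2$ is itself a pure isometry whenever $V_1$ is, since $V^{*m} = V_2^{*m}V_1^{*m}$ and $\|V^{*m}h\| \le \|V_1^{*m}h\| \to 0$; this guarantees that $\Pi_V$ exists as a Wold unitary onto $H^2_{\clw}(\D)$, and so $\TP_1 = \Pi_{V_1}\Pi_V^*$ is a unitary from $H^2_{\clw}(\D)$ onto $H^2_{\clw_1}(\D)$. Next, since $V_1 V_2 = V_2 V_1$, Theorem \ref{thm-commutator} immediately yields $\Pi_{V_1} V_2 \Pi_{V_1}^* = M_{\Theta_{V_2}}$ with $\Theta_{V_2}(z) = P_{\clw_1}(I_{\clh}-zV_1^*)^{-1}V_2|_{\clw_1}$, and this is exactly the formula for $\Theta_{V_j}$ in the statement. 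Combining this with $\Pi_{V_1} V_1 = M_z^{\clw_1}\Pi_{V_1}$ gives $\Pi_{V_1} V = M_z^{\clw_1}M_{\Theta_{V_2}}\Pi_{V_1}$, so the intertwining $\TP_1 M_z^{\clw} = M_{z\Theta_{V_2}}\TP_1$ follows by post-composing with $\Pi_V^*$; the dual intertwining $\TP_1^* M_z^{\clw_1} = M_{\Phi_1}\TP_1^*$ is the analogous calculation starting from $\Pi_V V_1 = M_{\Phi_1}\Pi_V$.

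For the kernel formulas I would compute directly on the dense set of reproducing kernels. Fixing $\eta \in \clw$, the projection formula (\ref{eq-F}) applied to the isometry $V_1$ gives $\eta = \sum_{m\ge 0} V_1^m P_{\clw_1}V_1^{*m}\eta$, so applying $\Pi_{V_1}$ term by term produces $\TP_1\eta = \Pi_{V_1}\eta = \sum_{m\ge 0} z^m P_{\clw_1}V_1^{*m}\eta = P_{\clw_1}[I_{\clh}+z(I_{\clh}-zV_1^*)^{-1}V_1^*]\eta$. Combining this with the previously established intertwining, $\TP_1(z^m\eta) = (z\Theta_{V_2}(z))^m\,\TP_1\eta$, and summing the geometric series $\TP_1(\mathbb{S}(\cdot,w)\eta) = \sum_{m\ge 0}\bar{w}^m\TP_1(z^m\eta)$ yields the stated formula with the inverse factor $(I_{\clw_1}-\bar{w}z\Theta_{V_2}(z))^{-1}$. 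For $\TP_1^*$, the identification $\TP_1^*\eta_1 = \eta_1$ (valid since $\Pi_V^*$ and $\Pi_{V_1}^*$ both act as the identity on $\clw_1 \subseteq \clw \subseteq \clh$) combined with $\TP_1^*M_z^{\clw_1} = M_{\Phi_1}\TP_1^*$ gives $\TP_1^*(z^m\eta_1) = \Phi_1(z)^m\eta_1$, and then summation over the kernel expansion furnishes $\TP_1^*(\mathbb{S}(\cdot,w)\eta_1) = (I_{\clw}-\Phi_1(z)\bar{w})^{-1}\eta_1$.

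The only genuinely delicate point I expect is verifying that the two series manipulations (interchanging $\Pi_{V_1}$ with the infinite sum in (\ref{eq-F}), and summing the kernel expansion under the bounded linear map $\TP_1$) are legitimate; this amounts to noting that the partial sums converge in $\clh$ and in $H^2_{\clw}(\D)$ respectively and invoking boundedness of $\Pi_{V_1}$ and $\TP_1$, together with the norm bound $\|\bar{w}z\Theta_{V_2}(z)\| \le |w| < 1$ that guarantees convergence of the Neumann series. Everything else is routine bookkeeping, and the case $i=2,\,j=1$ follows verbatim by swapping the roles of $V_1$ and $V_2$.
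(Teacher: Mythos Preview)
Your proposal is correct and follows essentially the same route as the paper: the paper likewise reduces to $i=1$, invokes Theorem~\ref{thm-commutator} to obtain $\Theta_{V_2}$, derives the intertwining $\TP_1 M_z^{\clw} = M_{z\Theta_{V_2}}\TP_1$ from $\Pi_{V_1}V = M_z^{\clw_1}M_{\Theta_{V_2}}\Pi_{V_1}$, computes $\TP_1\eta$ via the projection formula~(\ref{eq-F}), and then sums the geometric series in the Szeg\H{o} kernel expansion to obtain both kernel formulas. Your added remarks on purity of $V$ and on the legitimacy of the series manipulations are details the paper leaves implicit, but the structure of the argument is identical.
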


Note that the inner multipliers $\Theta_{V_i} \in
H^\infty_{\clb(\clw_j)}(\D)$ above satisfy the following
equalities:
\[
\Pi_{V_j} V_i = M_{\Theta_{V_i}} \Pi_{V_j}.
\]

Now let $(V_1, V_2)$ be a pair of commuting isometries such that
both $V_1$ and $V_2$ are pure isometries. The above result leads to
an analytic representation of such pairs.

\begin{corollary}\label{cor-one pure}
Let $(V_1, V_2)$ be a pair of
commuting pure isometries on a Hilbert space $\clh$. If
$(M_{{\Phi_1}}, M_{{\Phi_2}})$ is the BCL representation
corresponding to $(V_1, V_2)$, then $M_{{\Phi_1}}$ and
$M_{{\Phi_2}}$ are pure isometries,
\[
\TP_1 M_{{\Phi_2}} = M_{\Theta_{V_2}} \TP_1, \; \TP_2 M_{{\Phi_1}} =
M_{\Theta_{V_1}} \TP_2,
\]
$\TP = \TP_2 \TP_1^* : H^2_{\clw_1}(\D) \raro H^2_{\clw_2}(\D)$ is a
unitary operator, and
\[
\TP M_z^{\clw_1} = M_{\Theta_{V_1}} \TP \; \text{and} \; \TP
M_{\Theta_{V_2}} = M_z^{\clw_2} \TP.
\]
Moreover, for each $w \in \D$ and $\eta_j \in \clw_j$, $j =1, 2$,
\[
\TP (\mathbb{S}(\cdot, w) \eta_1) = (I_{\clw_2} - \bar{w}
\Theta_{V_1}(z) )^{-1} P_{\clw_2} ( I_{\clh} - z V_2^*)^{-1} \eta_1,
\]
and
\[
\TP^* (\mathbb{S}(\cdot, w) \eta_2) = (I_{\clw_1} - \bar{w}
\Theta_{V_2}(z) )^{-1} P_{\clw_1} ( I_{\clh} - z V_1^*)^{-1} \eta_2.
\]
\end{corollary}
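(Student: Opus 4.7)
The plan is to apply Theorem \ref{thm-one pure} to each of $V_1$ and $V_2$ in turn (both hypotheses are valid because $V = V_1 V_2$ is a pure isometry whenever either factor is) and then assemble the claims by chaining the resulting intertwinings. In particular, both $\TP_1 = \Pi_{V_1}\Pi_V^*$ and $\TP_2 = \Pi_{V_2}\Pi_V^*$ are unitary, and since $\Pi_V^*\Pi_V = I_{\clh}$ the composition $\TP = \TP_2 \TP_1^*$ collapses to $\Pi_{V_2}\Pi_{V_1}^*$, a product of two unitaries, which takes care of the unitarity claim.

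The purity of $M_{\Phi_j}$ is immediate from the unitary equivalence $\Pi_V V_j = M_{\Phi_j}\Pi_V$. The intertwinings $\TP_i M_{\Phi_j} = M_{\Theta_{V_j}}\TP_i$ (for $i \neq j$) follow by substituting $\Pi_V^* M_{\Phi_j} = V_j \Pi_V^*$ and $\Pi_{V_i} V_j = M_{\Theta_{V_j}} \Pi_{V_i}$ (from the remark after Theorem \ref{thm-one pure}) into $\TP_i = \Pi_{V_i}\Pi_V^*$.

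For the intertwinings of $\TP$ itself, I would chain Theorem \ref{thm-one pure}'s relation $\TP_1^* M_z^{\clw_1} = M_{\Phi_1}\TP_1^*$ with $\TP_2 M_{\Phi_1} = M_{\Theta_{V_1}}\TP_2$ from the previous step to produce $\TP M_z^{\clw_1} = M_{\Theta_{V_1}}\TP$. For the second intertwining, I would combine the equivalent form $\TP_1^* M_{\Theta_{V_2}} = M_{\Phi_2}\TP_1^*$ (obtained by conjugating $\TP_1 M_{\Phi_2} = M_{\Theta_{V_2}}\TP_1$ by the unitary $\TP_1$) with $\TP_2 M_{\Phi_2} = M_z^{\clw_2}\TP_2$, the latter coming from $\Pi_{V_2} V_2 = M_z^{\clw_2}\Pi_{V_2}$ together with $\Pi_V V_2 = M_{\Phi_2}\Pi_V$, to yield $\TP M_{\Theta_{V_2}} = M_z^{\clw_2}\TP$.

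For the reproducing-kernel formulas, I would iterate $\TP M_z^{\clw_1} = M_{\Theta_{V_1}}\TP$ to obtain $\TP(z^m \eta_1) = \Theta_{V_1}(z)^m \TP(\eta_1)$ for every $m \geq 0$ and $\eta_1 \in \clw_1$. Observing that $\clw_1 \subseteq \clw$ (since $V_1 \clh \supseteq V_1 V_2 \clh$) makes $\Pi_{V_1}^* \eta_1$ equal to $\eta_1$ viewed in $\clh$, so that $\TP \eta_1 = \Pi_{V_2}\eta_1 = \sum_{m \geq 0} z^m P_{\clw_2} V_2^{*m} \eta_1 = P_{\clw_2}(I_\clh - zV_2^*)^{-1}\eta_1$ by the Wold-von Neumann series for $\Pi_{V_2}$. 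Summing $\mathbb{S}(\cdot, w)\eta_1 = \sum_{m \geq 0} \bar w^m z^m \eta_1$ term by term then delivers the stated formula, and the dual expression for $\TP^*$ follows by symmetry after swapping the indices $1 \leftrightarrow 2$. The main obstacle, though mild, is bookkeeping: tracking which Hardy space each operator lives on and exploiting the inclusions $\clw_j \subseteq \clw$ so that $\Pi_V^*$ and $\Pi_{V_i}^*$ act as the identity on the relevant wandering vectors, which is exactly what causes the series to collapse to the clean resolvent form.
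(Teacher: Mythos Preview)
Your proposal is correct and follows essentially the same strategy as the paper's proof: apply Theorem \ref{thm-one pure} to each index, chain the resulting intertwinings to handle $\TP$, and compute the action on reproducing kernels by iterating the intertwining on monomials and summing the Wold--von Neumann series. Your derivation of $\TP_1 M_{\Phi_2} = M_{\Theta_{V_2}} \TP_1$ directly from $\Pi_V^* M_{\Phi_2} = V_2 \Pi_V^*$ and $\Pi_{V_1} V_2 = M_{\Theta_{V_2}} \Pi_{V_1}$ is in fact slightly more direct than the paper's route through $M_{\Phi_2} = M_{\Phi_1}^* M_z^{\clw}$, but the overall architecture is the same.
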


\begin{proof}
A repeated application of Theorem \ref{thm-one pure} yields
\[
\begin{split}
\TP_1 M_{{\Phi_2}} & = \TP_1 M_{{\Phi_1}}^* (M_{{\Phi_1}}
M_{{\Phi_2}})
\\
& = \TP_1 M_{{\Phi_1}}^* M_z^{\clw}
\\
& = (M_z^{\clw_1})^* \TP_1 M_z^{\clw}
\\
& = (M_z^{\clw_1})^* M_{z \Theta_{V_2}} \TP_1,
\end{split}
\]
that is, $\TP_1 M_{{\Phi_2}} = M_{\Theta_{V_2}} \TP_1$ and similarly
$\TP_2 M_{{\Phi_1}} = M_{\Theta_{V_1}} \TP_2$. For $\eta_1 \in
\clw_1$, we have $\Pi_{V_2} \eta_1 = P_{\clw_2} ( I_{\clh} - z
V_2^*)^{-1} \eta_1$. Since $\tilde{\Pi}_1^* \eta_1 = \eta_1$ and
$\Pi_V^* \eta_1 = \eta_1$, it follows that
\[
\TP \eta_1 = \TP_2 \eta_1 = \Pi_{V_2} \Pi_V^* \eta_1 = \Pi_{V_2}
\eta_1,
\]
that is $\TP \eta_1 = P_{\clw_2} ( I_{\clh} - z V_2^*)^{-1} \eta_1$.
Now using the identity $\TP (z \eta_1) = M_{\Theta_{V_1}} \TP
\eta_1$, we have
\[
\TP (z^m \eta_1) = \Theta_{V_1}(z)^m  P_{\clw_2} ( I_{\clh} - z
V_2^*)^{-1} \eta_1,
\]
for all $m \geq 0$ and $\eta_1 \in \clw_1$. Finally
$\mathbb{S}(\cdot, w) \eta_1 = \displaystyle{\sum_{m=0}^\infty}
\bar{w}^m z^m \eta_1$ gives
\[
\TP (\mathbb{S}(\cdot, w) \eta_1) = (I_{\clw_2} - \bar{w}
\Theta_{V_1}(z) )^{-1} P_{\clw_2} ( I_{\clh} - z V_2^*)^{-1} \eta_1.
\]
The final equality of the corollary follows from the equality
\[
\TP^* (z^m \eta_2) = \Theta_{V_2}(z)^m (\TP^* \eta_2) =
\Theta_{V_2}(z)^m P_{\clw_1}(I_{\clh} - z V_1^*)^{-1} \eta_2,
\]
for all $m \geq 0$ and $\eta_2 \in \clw_2$. This concludes the
proof.
\end{proof}

In the final section, we will connect the analytic descriptions of
$\tilde{\Pi}_1$ and $\tilde{\Pi}_2$ as in Theorem \ref{thm-one pure}
with the classical notion of the Sz.-Nagy and Foias characteristic
functions of contractions on Hilbert spaces \cite{NF}.

\section{Defect Operators}

Throughout this section, we will mostly work on general (not
necessarily pure) pairs of commuting isometries. Let $(V_1, V_2)$ be
a pair of commuting isometries on a Hilbert space $\clh$. The
\textit{defect operator} $C(V_1,V_2)$ of $(V_1, V_2)$ (cf.
\cite{HQY}) is defined as the self-adjoint
operator
\[
C(V_1,V_2) = I - V_1 {V_1}^* - V_2 {V_2}^* + V_1 V_2 {V_1}^*
{V_2}^*.
\]

Recall from Section 3 that given a pair of commuting isometries
$(V_1, V_2)$, we write $V = V_1 V_2$, and denote by
\[
\clw_j = \clw(V_j) =  \ker V_j^* = \clh \ominus V_j \clh,
\]
the wandering subspace for $V_j$, $j = 1, 2$. The wandering subspace
for $V$ is denoted by $\clw$. Finally, we recall that (see Lemma
\ref{lem-U}) $\clw= \clw_1 \oplus V_1\clw_2 = V_2\clw_1 \oplus
\clw_2$. This readily implies
\begin{equation}\label{eqn-W}
P_{\clw} = P_{\clw_1} \oplus P_{V_1 \clw_2} = P_{V_2\clw_1} \oplus
P_{\clw_2}.
\end{equation}

The following lemma is well known to the experts, but for the sake
of completeness we provide a proof of the statement.

\begin{lemma}\label{lem-Vand12}
Let $(V_1, V_2)$ be a commuting pair of isometries on $\clh$. Then
$\clh_s(V)$ and $\clh_u(V)$ are $V_j$-reducing subspaces,
\[
\clh_s(V_j) \subseteq \clh_s(V), \; \text{and} \;\clh_u(V_j)
\supseteq \clh_u(V),
\]
for all $j = 1, 2$.
\end{lemma}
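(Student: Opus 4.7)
The plan is to exploit the intersection characterization $\clh_u(V) = \bigcap_{m \geq 0} V^m \clh$ of the unitary part (which follows from Theorem \ref{thm-Wold}, since $V|_{\clh_u(V)}$ is onto while $V|_{\clh_s(V)}$ is a shift whose positive powers intersect trivially), and analogously for each $V_j$. Two simplifications are available at the outset: a closed subspace of $\clh$ is $V_j$-reducing precisely when its orthogonal complement is, so it suffices to prove reducibility for $\clh_u(V)$ alone; and the inclusion $\clh_s(V_j) \subseteq \clh_s(V)$ is equivalent to $\clh_u(V) \subseteq \clh_u(V_j)$, so I only need to establish one of these.

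For the reducibility of $\clh_u(V)$, the $V_j$-invariance is immediate from commutativity: if $h = V^m h_m$ for some $h_m \in \clh$, then $V_j h = V^m (V_j h_m) \in V^m \clh$. For $V_j^*$-invariance one needs to work slightly harder. Given $h \in \bigcap_m V^m \clh$, I would use the representation at level $m+1$, namely $h = V^{m+1} h_{m+1} = V_1^{m+1} V_2^{m+1} h_{m+1}$; then, for instance with $j = 1$, one computes $V_1^* h = V_1^m V_2^{m+1} h_{m+1} = V^m(V_2 h_{m+1}) \in V^m \clh$, and the case $j = 2$ is symmetric. Intersecting over all $m \geq 0$ then places $V_j^* h$ back in $\clh_u(V)$.

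For the inclusion $\clh_u(V) \subseteq \clh_u(V_j)$, I would use the elementary containment $V^m \clh = V_j^m V_k^m \clh \subseteq V_j^m \clh$ (with $\{j,k\} = \{1,2\}$), valid for every $m \geq 0$; intersecting over $m$ yields the claim, and taking orthogonal complements delivers $\clh_s(V_j) \subseteq \clh_s(V)$ together with the equivalent reverse inclusion $\clh_u(V_j) \supseteq \clh_u(V)$. The only step requiring any real care is the $V_j^*$-invariance of $\clh_u(V)$, since $V_j^*$ does not commute with $V_j$ and the product $V = V_1 V_2$ must be decomposed before an adjoint can be pulled past; everything else is a direct reading of the intersection characterization combined with commutativity, and I do not anticipate a genuine obstacle.
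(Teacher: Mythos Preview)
Your argument is correct, but it takes a genuinely different route from the paper. The paper proves reducibility of $\clh_s(V)$ directly, working with the decomposition $\clh_s(V) = \bigoplus_{m \geq 0} V^m \clw$ and invoking Lemma~\ref{lem-U} to see that $V_1 \clw \subseteq \clw \oplus V\clw$ and $V_1^* \clw = \clw_2 \subseteq \clw$; from there $V_1$- and $V_1^*$-invariance of each summand (and hence of $\clh_s(V)$) follow. You instead work on the unitary side, using the intersection formula $\clh_u(V) = \bigcap_m V^m \clh$ and the trick of writing $h = V^{m+1} h_{m+1}$ so that one $V_j^*$ can be absorbed. Your route is slightly more elementary in that it avoids any appeal to the wandering-subspace decomposition of Lemma~\ref{lem-U}; the paper's route, on the other hand, makes the action of $V_j$ on $\clh_s(V)$ more explicit, which is consistent with the paper's later use of that structure. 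For the inclusions, the paper argues both $V^m \clh \subseteq V_j^m \clh$ and the SOT implication $V_j^{*n} h \to 0 \Rightarrow V^{*n} h \to 0$, whereas you observe (correctly) that one of these suffices by orthogonal complements.
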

\begin{proof}
For the first part we only need to prove that $\clh_s(V)$ is a
$V_1$-reducing subspace. Note that since (see Lemma \ref{lem-U})
$V_1 \clw \subseteq \clw \oplus V \clw$, it follows that
\[
V_1 V^m \clw \subseteq V^m (\clw \oplus V \clw) \subseteq \clh_s(V),
\]
for all $m \geq 0$. This clearly implies that $V_1 \clh_s(V)
\subseteq \clh_s(V)$. On the other hand, since $V_1^* \clw = \clw_2
\subseteq \clw$ and
\[
V_1^* V^m \clw = V^{m-1}(V_2 \clw) \subseteq V^{m-1} (\clw \oplus V
\clw),
\]
it follows that $V_1^* \clh_s(V) \subseteq \clh_s(V)$. To prove the
second part of the statement, it is enough to observe that
\[
V^m \clh = V_1^m (V_2^m \clh) = V_2^m (V_1^m \clh) \subseteq V_1^m
\clh, V_2^m \clh,
\]
for all $m \geq 0$, and as $n \raro \infty$
\[
V_1^{*n} h \raro 0, \; \mbox{or} \;V_2^{*n} h \raro 0 \Rightarrow
V^{*n} h \raro 0,
\]
for any $h \in \clh$. This concludes the proof of the lemma.
\end{proof}

The following characterizations of doubly commuting isometries will
prove important in the sequel.

\begin{lemma} \label{lemma21}
Let $(V_1, V_2) $ be a pair of commuting isometries on a Hilbert
space $\clh$.  Then the following are equivalent:

\NI(i) $(V_1, V_2) $ is doubly commuting.

\NI (ii) $V_2 \clw_1 \subseteq \clw_1$.

\NI (iii) $V_1\clw_2 \subseteq \clw_2$.
\end{lemma}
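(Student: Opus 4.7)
The plan is to establish the cycle (i) $\Rightarrow$ (ii) $\Rightarrow$ (i) and then note that (i) $\Leftrightarrow$ (iii) follows by swapping the roles of $V_1$ and $V_2$, using the fact that the doubly commuting condition $V_1^* V_2 = V_2 V_1^*$ is symmetric (it is equivalent, by taking adjoints, to $V_2^* V_1 = V_1 V_2^*$).

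For (i) $\Rightarrow$ (ii), I would use the identification $\clw_1 = \ker V_1^*$. If $\eta_1 \in \clw_1$, then
\[
V_1^* V_2 \eta_1 = V_2 V_1^* \eta_1 = 0,
\]
so $V_2 \eta_1 \in \ker V_1^* = \clw_1$. This is the easy direction.

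For (ii) $\Rightarrow$ (i), the key idea is to exploit the orthogonal decomposition $\clh = \clw_1 \oplus V_1 \clh$ and compare the two operators $V_1^* V_2$ and $V_2 V_1^*$ on each summand. On $V_1 \clh$, writing $h = V_1 g$, we have
\[
V_1^* V_2 (V_1 g) = V_1^* V_1 V_2 g = V_2 g = V_2 V_1^* (V_1 g),
\]
where we used commutativity $V_1 V_2 = V_2 V_1$ and the isometry property $V_1^* V_1 = I$. On $\clw_1$, for $\eta_1 \in \clw_1$, we have $V_2 V_1^* \eta_1 = 0$ trivially, and the hypothesis (ii) gives $V_2 \eta_1 \in \clw_1 = \ker V_1^*$, hence $V_1^* V_2 \eta_1 = 0$ as well. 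Together these yield $V_1^* V_2 = V_2 V_1^*$ on all of $\clh$.

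The equivalence (i) $\Leftrightarrow$ (iii) follows by the same argument with $V_1$ and $V_2$ interchanged. There is no real obstacle here: the main ingredient is simply the observation that the wandering subspace $\clw_j$ is precisely $\ker V_j^*$, which converts the subspace condition (ii) (resp.\ (iii)) into the operator identity $V_1^* V_2 \eta_1 = 0$ for $\eta_1 \in \clw_1$ (resp.\ $V_2^* V_1 \eta_2 = 0$ for $\eta_2 \in \clw_2$), and this is exactly what is needed to complement the easy computation on the range $V_1 \clh$ (resp.\ $V_2 \clh$).
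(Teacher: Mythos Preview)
Your proof is correct, and it is genuinely more elementary than the paper's. The paper proves (ii) $\Rightarrow$ (i) by first invoking the Wold--von Neumann decomposition $\clh = \clh_s(V) \oplus \clh_u(V)$ of the product $V = V_1 V_2$, together with Lemma~\ref{lem-Vand12}, to reduce to the shift part $\clh_s(V) = \bigoplus_{m \geq 0} V^m \clw$; it then checks $V_1^* V_2 = V_2 V_1^*$ separately on $\clw$ (using $\clw = \clw_1 \oplus V_1 \clw_2$) and on each $V^m \clw$ for $m \geq 1$. Your argument bypasses all of this by using instead the single decomposition $\clh = \clw_1 \oplus V_1 \clh$, which is nothing more than $\clh = \ker V_1^* \oplus \operatorname{ran} V_1$ for the isometry $V_1$; the identity $V_1^* V_2 = V_2 V_1^*$ is then immediate on $V_1 \clh$ from $V_1^* V_1 = I$ and commutativity, and on $\clw_1$ from the hypothesis. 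Your route is shorter, uses no auxiliary lemmas, and makes no reference to the product $V$ or its Wold decomposition; the paper's route, while more elaborate, fits the surrounding narrative in which the decomposition of $V$ is the organizing principle.
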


\begin{proof}
Since
(i) implies (ii) and (iii), by symmetry we only need to show that (ii)
implies (i).
Let $V_2\clw_1 \subseteq
\clw_1 $. Let $\clh = \clh_s(V) \oplus \clh_u(V)$ be the Wold-von
Neumann orthogonal decomposition of $V$ (see Theorem
\ref{thm-Wold}). Then $\clh_s(V)$ and $\clh_u(V)$ are joint $(V_1,
V_2)$-reducing subspaces, and the pair $(V_1|_{\clh_u(V)},
V_2|_{\clh_u(V)})$ on $\clh_u$ is doubly commuting, because
$V_j|_{\clh_u(V)}$, $j = 1, 2$, are unitary operators, by Lemma
\ref{lem-Vand12}. Now it only remains to prove that $V_1^* V_2 = V_2
V_1^*$ on $\clh_s(V)$. Since
\[
(V_1^* V_2  - V_2V_1^*)V^m = V_1^*V^m V_2  - V_2 V_1^*V^m = V^{m-1}
V_2^2 - V_2^2 V^{m-1} =0,
\]
it follows that $V_1^* V_2  - V_2V_1^* = 0$ on $V^m \clw$ for all $m
\geq 1$. In order to complete the proof we must show that $V_1^* V_2
= V_2V_1^*$ on $\clw$. To this end, let $\eta = \eta_1 \oplus
V_1\eta_2 \in \clw$ for some $\eta_1 \in \clw_1$ and $\eta_2 \in
\clw_2$. Then
\[
V_1^*V_2(\eta_1 \oplus V_1\eta_2) = V_1^*V_2\eta_1 +
V_1^*V_2V_1\eta_2 = V_2\eta_2,
\]
as $V_2\clw_1 \subseteq \clw_1$, and on the other hand
\[
V_2V_1^*(\eta_1 \oplus V_1\eta_2) = V_2V_1^*\eta_1 +
V_2V_1^*V_1\eta_2 = V_2\eta_2.
\]
This completes the proof.
\end{proof}

The key of our geometric approach is the following simple
representation of defect operators.

\begin{lemma}\label{lem-C}
\[
C(V_1, V_2) = P_{\clw_1} - P_{V_2\clw_1} = P_{\clw_2} -
P_{V_1\clw_2}.
\]
\end{lemma}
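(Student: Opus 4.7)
The plan is to expand $C(V_1,V_2)$ using the isometric relations and commutativity, and to recognize each resulting term as a projection onto a wandering subspace or its image. The key inputs are: (a) $I - V_j V_j^* = P_{\clw_j}$ for $j=1,2$, since $\clw_j = \ker V_j^*$ is the wandering subspace of the isometry $V_j$; (b) for any isometry $V$ on $\clh$ and any closed subspace $\clm \subseteq \clh$, one has $V P_{\clm} V^* = P_{V\clm}$ (since $V^*V = I$, a direct check on the decomposition $\clh = V\clh \oplus \ker V^*$ verifies this).

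First I would rewrite the last term of
\[
C(V_1,V_2) = I - V_1 V_1^* - V_2 V_2^* + V_1 V_2 V_1^* V_2^*
\]
using commutativity:
\[
V_1 V_2 V_1^* V_2^* = V_2 V_1 V_1^* V_2^* = V_2 (I - P_{\clw_1}) V_2^* = V_2 V_2^* - V_2 P_{\clw_1} V_2^*.
\]
Then by fact (b) above applied to the isometry $V_2$ and the closed subspace $\clw_1$,
\[
V_2 P_{\clw_1} V_2^* = P_{V_2 \clw_1}.
\]
Substituting back and using $I - V_1 V_1^* = P_{\clw_1}$,
\[
C(V_1,V_2) = P_{\clw_1} - V_2 V_2^* + V_2 V_2^* - P_{V_2 \clw_1} = P_{\clw_1} - P_{V_2 \clw_1},
\]
which gives the first equality.

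For the second equality, I would simply repeat the same manipulation after writing $V_1 V_2 V_1^* V_2^* = V_1 V_2 V_2^* V_1^* = V_1(I - P_{\clw_2})V_1^* = V_1 V_1^* - P_{V_1 \clw_2}$, obtaining $C(V_1,V_2) = P_{\clw_2} - P_{V_1\clw_2}$ by the symmetric cancellation. There is no real obstacle here; the only point worth emphasizing is the identity $V P_{\clm} V^* = P_{V\clm}$ for an isometry $V$, which makes the two mixed terms collapse into projections onto the images $V_2 \clw_1$ and $V_1 \clw_2$ of the wandering subspaces. Note in passing that since $V_2$ is an isometry, $V_2\clw_1$ is automatically closed (being the isometric image of the closed subspace $\clw_1$), so the projection $P_{V_2\clw_1}$ is well defined, and similarly for $P_{V_1\clw_2}$.
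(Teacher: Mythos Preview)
Your proof is correct. The paper's argument is organized slightly differently: it first rewrites
\[
C(V_1,V_2) = (I - V_1 V_1^*) + (I - V_2 V_2^*) - (I - VV^*) = P_{\clw_1} + P_{\clw_2} - P_{\clw},
\]
and then invokes the orthogonal decomposition $\clw = \clw_1 \oplus V_1\clw_2 = V_2\clw_1 \oplus \clw_2$ established earlier (Lemma~\ref{lem-U}, equation~(\ref{eqn-W})) to write $P_{\clw} = P_{\clw_1} + P_{V_1\clw_2} = P_{V_2\clw_1} + P_{\clw_2}$, from which both equalities drop out by cancellation. Your route bypasses that lemma and works directly with the conjugation identity $V P_{\clm} V^* = P_{V\clm}$ for an isometry $V$. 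The underlying ingredients are really the same---Lemma~\ref{lem-U} is itself proved via $I - VV^* = (I - V_1 V_1^*) + V_1(I - V_2 V_2^*)V_1^*$, which is exactly your manipulation in disguise---but your version is self-contained, while the paper's version makes explicit the connection between the defect operator and the geometry of the joint wandering subspace $\clw$.
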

\begin{proof}
The result readily follows from (\ref{eqn-W}) and
\[
\begin{split}
C(V_1, V_2) & = (I- V_1 {V_1}^*) +(I - V_2 {V_2}^*) - (I - VV^*)\\
& = P_{\clw_1} + P_{\clw_2} - P_{\clw}.
\end{split}
\]
\end{proof}

The final ingredient to our analysis is the fringe operator $F_2$.
The notion of fringe operators plays a significant role in the study
of joint shift-invariant closed subspaces of the Hardy space over
$\D^2$ (see the discussion at the beginning of Section 5). Given a
pair of commuting isometries $(V_1, V_2)$ on $\clh$, the
\textit{fringe operators} $F_1 \in \clb(\clw_2)$ and $F_2 \in
\clb(\clw_1)$ are defined by
\[
F_j = P_{\clw_i} V_j|_{\clw_i} \quad \quad \quad (i \neq j).
\]
Of particular interest to us are the isometric fringe operators. Note
that
\[
F_2^* F_2 = P_{\clw_1} V_2^* P_{\clw_1} V_2|_{\clw_1}.
\]

\begin{lemma}\label{lem-F1}
The fringe operator $F_2$ on $\clw_1$ is an isometry if and only if
$V_2 \clw_1 \subseteq \clw_1$.
\end{lemma}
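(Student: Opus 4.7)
The plan is to give a one-line Pythagorean argument. Since $V_2$ is a (global) isometry, for any $\eta \in \clw_1$ we have $\|V_2 \eta\| = \|\eta\|$, and the orthogonal decomposition $\clh = \clw_1 \oplus V_1 \clh$ applied to $V_2\eta$ gives
\[
V_2 \eta = P_{\clw_1} V_2 \eta + P_{\clw_1}^{\perp} V_2 \eta,
\]
so by Pythagoras
\[
\|\eta\|^2 = \|V_2 \eta\|^2 = \|P_{\clw_1} V_2 \eta\|^2 + \|P_{\clw_1}^{\perp} V_2 \eta\|^2 = \|F_2 \eta\|^2 + \|P_{\clw_1}^{\perp} V_2 \eta\|^2.
\]

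From this identity the equivalence is immediate. First, if $V_2 \clw_1 \subseteq \clw_1$, then $P_{\clw_1}^{\perp} V_2 \eta = 0$ for every $\eta \in \clw_1$, hence $\|F_2 \eta\| = \|\eta\|$ and $F_2$ is an isometry on $\clw_1$. Conversely, if $F_2$ is an isometry, then $\|P_{\clw_1}^{\perp} V_2 \eta\| = 0$ for all $\eta \in \clw_1$, which says exactly that $V_2 \eta \in \clw_1$, i.e.\ $V_2 \clw_1 \subseteq \clw_1$.

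There is no real obstacle here; the statement is essentially the observation that a contractive compression of an isometry to an invariant (for the domain) subspace is isometric iff the range lies inside the subspace. If a more structural proof is preferred, one can alternatively invoke Lemma \ref{lem-C}, noting that $I_{\clw_1} - F_2^* F_2 = P_{\clw_1} - P_{\clw_1} V_2^* P_{\clw_1} V_2|_{\clw_1}$ vanishes precisely when $V_2|_{\clw_1}$ maps into $\clw_1$; but the direct Pythagorean route above is the cleanest.
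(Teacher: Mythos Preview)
Your proof is correct. Both your argument and the paper's reduce to the identity ``defect of $F_2$ on $\eta$ equals the component of $V_2\eta$ outside $\clw_1$,'' but the routes differ slightly. The paper computes the operator
\[
I_{\clw_1} - F_2^*F_2 = P_{\clw_1} V_2^* P_{V_1\clw_2} V_2|_{\clw_1}
\]
by invoking the wandering-subspace decomposition $\clw = \clw_1 \oplus V_1\clw_2$ from Lemma~\ref{lem-U} (equation~(\ref{eqn-W})), and then identifies $V_1\clw_2$ with $\clw\ominus\clw_1$ to finish. Your Pythagorean argument bypasses all of that structure: it only uses that $V_2$ is a global isometry and the orthogonal splitting $\clh = \clw_1 \oplus \clw_1^{\perp}$, so it is strictly more elementary and would work verbatim for the compression of any isometry to any closed subspace. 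One small slip: your parenthetical reference to Lemma~\ref{lem-C} is off --- that lemma concerns the defect operator $C(V_1,V_2)$, not $I_{\clw_1}-F_2^*F_2$; the ``structural'' alternative you sketch is really the paper's own argument via (\ref{eqn-W}).
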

\begin{proof}
As $I_{\clw_1} - F_2^* F_2 = I_{\clw_1} - P_{\clw_1} V_2^*
P_{\clw_1} V_2|_{\clw_1}$, (\ref{eqn-W}) implies that
\[
\begin{split}
I_{\clw_1} - F_2^* F_2 & = P_{\clw_1} V_2^* P_{V_1 \clw_2}
V_2|_{\clw_1}.
\end{split}
\]
Then $F_2^* F_2 = I_{\clw_1}$ if and only if $P_{V_1 \clw_2}
V_2|_{\clw_1} = 0$, or, equivalently, if and only if $V_2 \clw_1
\perp V_1 \clw_2 = \clw_1^{\perp}$, by Lemma \ref{lem-U}. This
completes the proof.
\end{proof}

Therefore, the fringe operator $F_2$ is an isometry if and only if
the pair $(V_1, V_2)$ is doubly commuting.

We are now ready to formulate a generalization of Theorem 3.4 in
\cite{HQY} by He, Qin and Yang. Here we do not assume that $(V_1,
V_2)$ is pure.

\begin{theorem}\label{thm-defect1}
Let $(V_1, V_2)$ be a pair of commuting isometries on $\clh$. Then
the following are equivalent:

\NI (a) $C(V_1,V_2) \geq 0$.

\NI (b) $V_2 \clw_1 \subseteq \clw_1$.

\NI (c) $(V_1, V_2)$ is doubly commuting.

\NI (d) $C(V_1,V_2)$ is a projection.

\NI (e) The fringe operator $F_2$ is an isometry.
\end{theorem}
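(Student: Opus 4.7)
The plan is to piggyback on the three lemmas just established and close all five equivalences with minimal additional work. Observe that Lemma~\ref{lemma21} already delivers (b)$\Leftrightarrow$(c), and Lemma~\ref{lem-F1} delivers (b)$\Leftrightarrow$(e). So the genuinely new content is the equivalence of (a), (b), and (d). I would organize this as a cycle (b)$\Rightarrow$(d)$\Rightarrow$(a)$\Rightarrow$(b), and in each step the key tool is the simple representation $C(V_1, V_2) = P_{\clw_1} - P_{V_2 \clw_1}$ supplied by Lemma~\ref{lem-C}.

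For (b)$\Rightarrow$(d), I would note that when $V_2 \clw_1 \subseteq \clw_1$, the subspace $V_2 \clw_1$ is closed in $\clh$ (because $V_2$ is an isometry) and sits inside $\clw_1$. Then the difference of orthogonal projections onto nested closed subspaces is itself an orthogonal projection, namely $P_{\clw_1} - P_{V_2 \clw_1} = P_{\clw_1 \ominus V_2 \clw_1}$, which by Lemma~\ref{lem-C} equals $C(V_1, V_2)$. The implication (d)$\Rightarrow$(a) is then immediate since every orthogonal projection is positive semi-definite.

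The main (mild) obstacle is (a)$\Rightarrow$(b): recovering a \emph{subspace containment} from a \emph{numerical} positivity hypothesis. The trick I plan to use is to test the positivity $P_{\clw_1} - P_{V_2 \clw_1} \geq 0$ on an arbitrary vector $h \in V_2 \clw_1$. Since $P_{V_2 \clw_1} h = h$, positivity gives
\[
\|h\|^2 = \langle P_{V_2 \clw_1} h, h \rangle \leq \langle P_{\clw_1} h, h \rangle = \|P_{\clw_1} h\|^2,
\]
while the contractive bound $\|P_{\clw_1} h\| \leq \|h\|$ is automatic. The resulting equality forces $P_{\clw_1} h = h$, i.e. $h \in \clw_1$. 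Thus $V_2 \clw_1 \subseteq \clw_1$, which is (b), and combining with the earlier cycle and the two preceding lemmas yields all five equivalences.
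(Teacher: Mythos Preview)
Your proof is correct and follows essentially the same route as the paper: both rely on Lemmas~\ref{lemma21}, \ref{lem-C}, and~\ref{lem-F1} for the substantive equivalences, with (d)$\Rightarrow$(a) immediate. The only cosmetic difference is that you reach (d) from (b) by writing $C(V_1,V_2)=P_{\clw_1 \ominus V_2\clw_1}$, whereas the paper reaches (d) from (c) via the factorization $C(V_1,V_2)=P_{\clw_1}P_{\clw_2}$ in the doubly commuting case; your explicit verification of (a)$\Rightarrow$(b) simply spells out the standard fact (left implicit in the paper) that $P_M\geq P_N$ forces $N\subseteq M$.
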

\begin{proof}
The equivalences of (a) and (b), (b) and (c), and (b) and (e) are
given in Lemma \ref{lem-C}, Lemma \ref{lemma21} and Lemma
\ref{lem-F1}, respectively. The implication (c) implies (d) follows
from
\[
C(V_1, V_2) = P_{\clw_1} P_{\clw_2} = P_{\clw_2} P_{\clw_1}.
\]
Clearly (d) implies (a). This completes the proof.
\end{proof}

We now prove that for a large class of pairs of commuting isometries
negative defect operator always implies the zero defect operator.

\begin{theorem}\label{thm--C}
Let $(V_1, V_2)$ be a pair of commuting isometries on $\clh$.
Suppose that $V_1$ or $V_2$ is pure. Then $C(V_1,V_2) \leq 0$ if and
only if $ C(V_1,V_2) =0$.
\end{theorem}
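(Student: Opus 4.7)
The plan is to use the concise form $C(V_1,V_2) = P_{\clw_2} - P_{V_1\clw_2} = P_{\clw_1} - P_{V_2\clw_1}$ from Lemma \ref{lem-C}, combined with the pureness hypothesis, to force one of the wandering subspaces to be trivial. By the symmetry between the two formulas I may assume, without loss of generality, that $V_1$ is pure and work with the representation $C(V_1,V_2) = P_{\clw_2} - P_{V_1\clw_2}$; the case where $V_2$ is pure is handled identically after swapping indices in what follows.

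Given the hypothesis $C(V_1,V_2) \leq 0$, the inequality $P_{\clw_2} \leq P_{V_1\clw_2}$ between orthogonal projections is equivalent to the range inclusion $\clw_2 \subseteq V_1\clw_2$. The key observation is that this inclusion forces $V_1^*$ to act as an isometry from $\clw_2$ into itself: for any $\eta \in \clw_2$, choose $\eta' \in \clw_2$ with $\eta = V_1 \eta'$, and apply $V_1^*$ to get $V_1^*\eta = V_1^*V_1\eta' = \eta' \in \clw_2$, with $\|V_1^*\eta\| = \|\eta'\| = \|V_1\eta'\| = \|\eta\|$. Iterating, $V_1^{*m}\eta \in \clw_2$ and $\|V_1^{*m}\eta\| = \|\eta\|$ for every $m \geq 0$ and every $\eta \in \clw_2$.

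Since $V_1$ is pure, $V_1^{*m} \to 0$ in the strong operator topology, so $V_1^{*m}\eta \to 0$ in norm; combined with the constant norm along the orbit, this forces $\|\eta\| = 0$. Hence $\clw_2 = \{0\}$, so both $P_{\clw_2}$ and $P_{V_1\clw_2}$ vanish, and therefore $C(V_1,V_2) = 0$. The reverse implication $C(V_1,V_2) = 0 \Rightarrow C(V_1,V_2) \leq 0$ is trivial. The only step requiring any care is translating the operator inequality $P_{\clw_2} \leq P_{V_1\clw_2}$ into the invariance $V_1^*\clw_2 \subseteq \clw_2$ with norm preservation; once this is in hand, purity of $V_1$ provides an immediate contradiction to the existence of any nonzero $\eta \in \clw_2$, so no substantive obstacle is anticipated.
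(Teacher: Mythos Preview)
Your argument is correct and follows essentially the same approach as the paper: both use Lemma~\ref{lem-C} to convert $C(V_1,V_2)\leq 0$ into an inclusion of the form $\clw_j \subseteq V_i\clw_j$, and then invoke purity of $V_i$ to force $\clw_j=\{0\}$. The only cosmetic difference is that the paper iterates the inclusion forward to obtain $\clw_j \subseteq \bigcap_m V_i^m\clh = \{0\}$, whereas you pull back with $V_i^*$ to produce a sequence of constant norm that must tend to zero; these are equivalent uses of purity.
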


\begin{proof}
With out loss of generality assume that $V_2$ is pure. If $C(V_1,
V_2) \leq 0$, then by Lemma \ref{lem-C}, we have $P_{\clw_1} \leq
P_{V_2 \clw_1}$, or, equivalently
\[
\clw_1 \subseteq V_2 \clw_1,
\]
and hence
\[
\clw_1 \subseteq {V_2}^m \clw_1 \subseteq {V_2}^m \clh ,
\]
for all $m \geq 0$. Therefore
\[
\clw_1 = \mathop{\cap}_{m=0}^\infty {V_2}^m \clw_1 \subseteq
\mathop{\cap}_{m=0}^\infty{V_2}^m \clh = \{ 0 \},
\]
as $V_2 $ is pure. Hence $\clw_1 = \{ 0 \}$ and ${V_2} \clw_1 = \{ 0
\}$. This gives $C(V_1, V_2) = P_{\clw_1} - P_{V_2\clw_1} = 0$.
\end{proof}

The same conclusion holds if we allow $\mbox{dim~} \clw_j < \infty$
for some $j \in \{1, 2\}$.

\begin{theorem}\label{thm--C}
Let $(V_1, V_2)$ be a pair of commuting isometries on $\clh$.
Suppose that $\mbox{dim~} \clw_j < \infty$ for some $j \in \{1,
2\}$. Then $C(V_1,V_2) \leq 0$ if and only if $ C(V_1,V_2) =0$.
\end{theorem}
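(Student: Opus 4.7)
The plan is to mimic the proof of the previous theorem, replacing the use of purity (which forced $\bigcap_m V_2^m\clh = \{0\}$) by a finite-dimensional pigeonhole argument. The starting point, exactly as before, is Lemma \ref{lem-C}, which gives the two symmetric identities
\[
C(V_1,V_2) \;=\; P_{\clw_1} - P_{V_2\clw_1} \;=\; P_{\clw_2} - P_{V_1\clw_2}.
\]
Thus $C(V_1,V_2)\le 0$ translates into $P_{\clw_1}\le P_{V_2\clw_1}$ (equivalently $\clw_1\subseteq V_2\clw_1$) or, using the other identity, $\clw_2\subseteq V_1\clw_2$.

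By the symmetric role of $V_1$ and $V_2$ in the two forms of $C(V_1,V_2)$, it suffices to treat the case $\dim\clw_1<\infty$; the case $\dim\clw_2<\infty$ follows by interchanging the indices and using the second identity in Lemma \ref{lem-C}. So assume $C(V_1,V_2)\le 0$ and $\dim\clw_1<\infty$. Then $\clw_1\subseteq V_2\clw_1$, and since $V_2$ is an isometry we have $\dim V_2\clw_1=\dim\clw_1<\infty$. A finite-dimensional subspace contained in another of the same dimension must coincide with it, hence $\clw_1 = V_2\clw_1$, which gives $P_{\clw_1}=P_{V_2\clw_1}$ and therefore $C(V_1,V_2)=0$. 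The reverse implication is trivial.

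There is essentially no obstacle here: once Lemma \ref{lem-C} is in hand, the argument is a one-line dimension count. The only point worth emphasizing is that finiteness of \emph{either} wandering subspace is enough, because the two expressions for $C(V_1,V_2)$ in Lemma \ref{lem-C} put $\clw_1$ and $\clw_2$ on an equal footing; this is what allows a single hypothesis $\dim\clw_j<\infty$ for some $j\in\{1,2\}$ to suffice.
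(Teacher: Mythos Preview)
Your proof is correct and essentially identical to the paper's own argument: assume without loss of generality that $\dim\clw_1<\infty$, use Lemma~\ref{lem-C} to translate $C(V_1,V_2)\le 0$ into $\clw_1\subseteq V_2\clw_1$, and then conclude $\clw_1=V_2\clw_1$ by the isometry-plus-dimension count. The paper merely states this last step in one line (``since $\clw_1\subseteq V_2\clw_1$ and $V_2$ is an isometry, it follows that $\clw_1=V_2\clw_1$''), whereas you spell out the pigeonhole explicitly, but there is no substantive difference.
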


\begin{proof}
We may suppose that $\mbox{dim~} \clw_1 < \infty$. Let $ C(V_1,V_2)
\leq 0$. Since $\clw_1 \subseteq V_2 \clw_1$ and $V_2$ is an
isometry, it follows that
\[
\clw_1 = V_2 \clw_1.
\]
Hence $C(V_1, V_2) = P_{\clw_1} - P_{V_2\clw_1} = 0$. This completes
the prove.
\end{proof}

The same conclusion also holds for positive defect operators.

\section{Concluding Remarks}

As pointed out in the introduction, a general theory for pairs of
commuting isometries is mostly unknown and unexplored (however, see
Popovici \cite{P}). In comparison, we would like to add that a great
deal is known about the structure of pairs (and even of $n$-tuples)
of commuting isometries with finite rank defect operators (see
\cite{BKS}, \cite{BKPS-1}, \cite{BKPS-2}). A complete classification
result is also known for $n$-tuples of doubly commuting isometries
(cf. \cite{GS}, \cite{SLO}, \cite{JS}). It is now natural to ask
whether the present results for pure pairs of commuting isometries
can be extended to arbitrary pairs of commuting isometries (see
\cite{D-Acta}, \cite{GG} and \cite{GS} for closely related results).
Another relevant question is to analyze the joint shift invariant
subspaces of the Hardy space over the unit bidisc \cite{ACD} from
our analytic and geometric point of views. More detailed discussion
on these issues will be given in forthcoming papers.

Also we point out that some of the results of this paper can be
extended to $n$-tuples of commuting isometries and will be discussed
in a future paper.

We conclude this paper by inspecting a connection between the
Sz.-Nagy and Foias characteristic functions of contractions on
Hilbert spaces \cite{NF} and the analytic representations of
$\tilde{\Pi}_1$ and $\tilde{\Pi}_2$ as described in Theorem
\ref{thm-one pure}.

\NI Let $T$ be a contraction on a Hilbert space $\clh$. The
\textit{defect operators} of $T$, denoted by $D_{T^*}$ and $D_T$,
are defined by
\[
D_{T^*} = ( I - TT^*)^{1/2}, \;\; D_{T} = ( I - T^* T)^{1/2}.
\]
The defect spaces, denoted by $\cld_{T^*}$ and $\cld_T$, are the
closure of the ranges of $D_{T^*}$ and $D_T$, respectively. The
\textit{characteristic function} \cite{NF} of the contraction $T$ is
defined by
\[
\theta_T(z) = [- T + z D_{T^*} (I - zT^*)^{-1} D_T]|_{\cld_T} \quad
\quad (z \in \D).
\]
It follows that $\theta_T \in H^\infty_{\clb(\cld_T,
\cld_{T^*})}(\D)$ \cite{NF}. The characteristic function is a
complete unitary invariant for the class of completely non-unitary
contractions. This function is also closely related to the
Beurling-Lax-Halmos inner functions for shift invariant subspaces of
vector-valued Hardy spaces. For a more detailed discussion of the
theory and applications of characteristic functions we refer to the
monograph by Sz.-Nagy and Foias \cite{NF}.

\NI Now let us return to the study of pairs of commuting isometries.
Let $(V_1, V_2)$ be a pair of commuting isometries on $\clh$. We
compute
\[
\begin{split}
P_{\clw_1}[I_{\clh} + z (I_{\clh} - z V_1^*)^{-1} V_1^*]|_{\clw} & =
[P_{\clw_1} + z P_{\clw_1} (I_{\clh} - z V_1^*)^{-1} V_1^*]|_{\clw}
\\
& = [I_{\clh} - V_1 V_1^* + z P_{\clw_1} (I_{\clh} - z V_1^*)^{-1}
V_1^*]|_{\clw}
\\
& = I_{\clw} + [ - V_1 + z P_{\clw_1} (I_{\clh} - z
V_1^*)^{-1}]V_1^*|_{\clw}.
\end{split}
\]
Since $V_1^* \clw = \clw_2$, it follows that
\[
[ - V_1 + z P_{\clw_1} (I_{\clh} - z V_1^*)^{-1}]V_1^*|_{\clw} = [-
V_1 + z D_{V^*_1} (I_{\clh} -z V_1^*)^{-1}
D_{V_2^*}]|_{\cld_{V_2^*}} (V_1^*|_{\clw}).
\]
Therefore, setting
\begin{equation}\label{eqn-theta12}
\theta_{V_1, V_2}(z) = [- V_1 + z D_{V^*_1} (I_{\clh} -z V_1^*)^{-1}
D_{V_2^*}]|_{\cld_{V_2^*}},
\end{equation}
for $z \in \D$, we have
\[
P_{\clw_1}[I_{\clh} + z (I_{\clh} - z V_1^*)^{-1} V_1^*]|_{\clw} =
I_{\clw} + \theta_{V_1, V_2}(z) V_1^*|_{\clw},
\]
for all $z \in \D$. Therefore, if $V_1$ is a pure isometry, then the
formula for $\tilde{\Pi}_1$ in Theorem \ref{thm-one pure}(i) can be
expressed as
\[
\tilde{\Pi}_1(\mathbb{S}(\cdot, w) \eta) = (I_{\clw_1} - \bar{w}
\Theta_{V_2}(z))^{-1} P_{\clw_1} [I_{\clw} + \theta_{V_1, V_2}(z)
V_1^*|_{\clw}] \eta.
\]
for all $w \in \D$ and $\eta \in \clw$. Similarly, if $V_2$ is a
pure isometry, then the formula for $\tilde{\Pi}_2$ in Theorem
\ref{thm-one pure} (ii) can be expressed as
\[
\tilde{\Pi}_2(\mathbb{S}(\cdot, w) \eta) = (I_{\clw_2} - \bar{w}
\Theta_{V_1}(z))^{-1} P_{\clw_2} [I_{\clw} + \theta_{V_2, V_1}(z)
V_2^*|_{\clw}] \eta,
\]
for all $w \in \D$ and $\eta \in \clw$, where
\begin{equation}\label{eqn-theta21}
\theta_{V_2, V_1}(z) =  [- V_2 + z D_{V^*_2} (I_{\clh} -z
V_2^*)^{-1} D_{V_1^*}]|_{\cld_{V_1^*}},
\end{equation}
for all $z \in \D$.

\NI It is easy to see that $\theta_{V_i, V_j}(z) \in \clb(\clw_j,
\clw)$ for all $z \in \D$ and $i \neq j$.

\NI Note that since the defect operator $D_{V_j} = 0$, the
characteristic function $\theta_{V_j}$ of $V_j$, $j = 1, 2$, is the
zero function. From this point of view, it is expected that the pair
of analytic invariants $\{\theta_{V_i, V_j} : i \neq j\}$ will
provide more information about pairs of commuting isometries.

\NI Subsequent theory for  pairs of commuting contractions and a
more detailed connection between pairs of commuting pure isometries
$(V_1, V_2)$ and the analytic invariants $\{\theta_{V_i, V_j} : i
\neq j\}$ as defined in (\ref{eqn-theta12}) and (\ref{eqn-theta21})
will be exhibited in more details in future occasion.

\vspace{0.3in}

\subsection*{Acknowledgements}
The authors are grateful to the
anonymous reviewers for their critical and constructive reviews and
suggestions that have substantially improved the manuscript. The
first author's research work is supported by NBHM Post Doctoral
Fellowship No. 2/40(50)/2015/ R \& D - II/11569. The research of the
second author was supported in part by (1) National Board of Higher
Mathematics (NBHM), India, grant NBHM/R.P.64/2014, and (2)
Mathematical Research Impact Centric Support (MATRICS) grant, File
No : MTR/2017/000522, by the Science and Engineering Research Board
(SERB), Department of Science \& Technology (DST), Government of
India.

\normalsize


\begin{thebibliography}{HD82}




\baselineskip=17pt



\bibitem[ACD]{ACD}
O. P. Agrawal, D. N. Clark and R. G. Douglas, \emph{Invariant
subspaces in the polydisk}, Pacific J. Math., 121 (1986), 1--11.

\bibitem[A]{AN}
T. Ando, {\em On a pair of commutative contractions}, Acta Sci.
Math. (Szeged), 24 (1963), 88--90.

\bibitem[BDF]{BDF3}
H. Bercovici, R.G. Douglas and C. Foias, {\em On the classification
of multi-isometries}, Acta Sci. Math. (Szeged), 72 (2006), 639--661.

\bibitem[BCL]{BCL}
C. A. Berger, L. A. Coburn and A. Lebow, {\em Representation and
index theory for $C^*$-algebras generated by commuting isometries},
J. Funct. Anal. 27 (1978), 51--99.

\bibitem[BKS]{BKS}
Z. Burdak, M. Kosiek and M. Slocinski, {\em The
canonical Wold decomposition of commuting isometries with finite
dimensional wandering spaces}, Bull. Sci. Math. 137 (2013),
653--658.

\bibitem[BKPS1]{BKPS-1}
Z. Burdak, M. Kosiek, O. Pagacz and M. Slocinski, {\em On the
commuting isometries},  Linear Algebra Appl. 516 (2017), 167--185.


\bibitem[BKPS2]{BKPS-2}
Z. Burdak, M. Kosiek, O. Pagacz and M. Slocinski, {\em Shift-type
properties of commuting, completely non doubly commuting pairs of
isometries}, Integral Equations Operator Theory 79 (2014), 107--122.



\bibitem[D]{D-Acta}
R. G. Douglas, {\em On the $C^*$-algebra of a one-parameter
semigroup of isometries}, Acta Math., 128 (1972), 143--151.

\bibitem[FF]{FF}
C. Foias and A. Frazho, {\em The commutant lifting approach to
interpolation problems}, Operator Theory: Advances and Applications,
44. Birkh\"{a}user Verlag, Basel, 1990.


\bibitem[GG]{GG}
D. Gaspar and P. Gaspar, {\em Wold decompositions and the unitary
model for bi--isometries}, Integral Equations Operator Theory 49
(2004), 419--433.


\bibitem[GS]{GS}
D. Gaspar and N. Suciu, {\em Wold decompositions for commutative
families of isometries}, An. Univ. Timisoara Ser. Stint. Mat., 27
(1989), 31--38.


\bibitem[H]{H}
P. Halmos, {\em Shifts on Hilbert spaces}, J. Reine Angew. Math. 208
(1961) 102--112.


\bibitem[HQY]{HQY}
W. He, Y. Qin, and R. Yang, {\em Numerical invariants for commuting
isometric pairs}, Indiana Univ. Math. J. 64 (2015), 1--19.

\bibitem[HH]{HH}
H. Helson, {\em Lectures on invariant subspaces}, Academic Press,
New York-London 1964.

\bibitem[NF]{NF}
B. Sz.-Nagy and C. Foias, {\em Harmonic analysis of operators on
Hilbert space.} North-Holland, Amsterdam-London, 1970.

\bibitem[P]{P}
D. Popovici, {\em A Wold-type decomposition for commuting
isometric pairs}, Proc. Amer. Math. Soc. 132 (2004) 2303--2314.

\bibitem[QY]{QY}
Y. Qin and R. Yang, {\em A note on Rudin's pathological submodule in
$H^2(\D^2)$}, Integral Equations Operator Theory 88 (2017),
363--372.

\bibitem[RR]{RR}
M. Rosenblum and J. Rovnyak, {\em Hardy classes and operator
theory}, Corrected reprint of the 1985 original. Dover Publications,
Inc., Mineola, NY, 1997.

\bibitem[R]{RUD}
W. Rudin, {\em Function Theory in Polydiscs}, Benjamin, New York,
1969.

\bibitem[S]{SLO}
M. Slocinski, {\em On the Wold-type decomposition of a pair of
commuting isometries}, Ann. Polon. Math. 37 (1980), 255--262.


\bibitem[JS]{JS}
J. Sarkar, {\em Wold decomposition for doubly commuting isometries.
Linear Algebra Appl.} 445 (2014), 289--301.

\bibitem[VN]{VN}
J. von Neumann, {\em Allgemeine Eigenwerttheorie Hermitescher
Funktionaloperatoren}, Math. Ann., 102 (1929), 49--131.

\bibitem[W]{W}
H. Wold, {\em A study in the analysis of stationary time series},
Stockholm, 1954.

\end{thebibliography}
\end{document}